\newcommand{\keywords}[1]{\par\addvspace\baselineskip
\noindent\keywordname\enspace\ignorespaces#1}
\begin{document}

\mainmatter  

\title{Measuring Closeness between Cayley 
Automatic Groups and Automatic Groups}

\titlerunning{Measuring closeness between Cayley 
automatic groups and automatic groups}

%
%
\author{Dmitry Berdinsky\inst{1,2}
 \and 
        Phongpitak Trakuldit\inst{1,2} 
} 

%
\authorrunning{D. Berdinsky and 
               P. Trakuldit}


\institute{Department of Mathematics, Faculty of Science, 
  Mahidol University,  
 Bangkok, Thailand  \and 
 Centre of Excellence in Mathematics, 
 Commission on Higher Education, Bangkok, Thailand  
 \\
 \mailsa\\
}

%
%

\toctitle{Measuring Closeness between Cayley 
Automatic Groups and Automatic Groups}
\tocauthor{Dmitry~Berdinsky and Phongpitak~Trakuldit}
\maketitle
\setcounter{footnote}{0}

\begin{abstract}      

 In this paper we introduce 
 a way to estimate a level of 
 closeness of Cayley automatic 
 groups to the class of automatic groups
 using a certain 
 numerical characteristic.  
 We characterize Cayley automatic 
 groups which are not automatic
 in terms of this numerical characteristic 
 and then study it for 
 the lamplighter group, the Baumslag--Solitar groups 
 and the Heisenberg group.


\keywords{automatic groups, Cayley automatic groups,
          automatic structures,
          numerical characteristics of groups,  
          lamplighter group, Heisenberg group, 
          Baumslag--Solitar groups}

\end{abstract}

 \section{Introduction}

    Cayley automatic groups had been introduced 
    by Kharlampovich, Khoussainov and Miasnikov 
    as a generalization of automatic groups \cite{KKM11}.
    They are all finitely generated groups 
    for which their directed labeled Cayley graphs are
    finite automata presentable structures
    (automatic structures) 
\cite{khoussainov2007three,KhoussainovNerode95,khoussainov2008open,BakhFrankSasha2007}; 
    see, e.g., also  
    \cite{Blumensath99,BlumensathGradel00,NiesThomas08,OliverThomas05,Pelecq97,RubinThesis,Senizergues92}.  
    In particular, Cayley automatic groups include 
    all automatic groups 
    in the sense of Thurston and others \cite{Epsteinbook}.      
    Cayley automatic groups inherit the key 
    algorithmic properties of automatic groups:     
    the first order theory 
    for a directed labeled Cayley graph of 
    a Cayley automatic groups is decidable, 
    the word problem in a Cayley automatic group 
    is decidable in quadratic time~\cite{KKM11}. 
    The set of Cayley automatic groups comprise 
    all finitely generated nilpotent groups 
    of nilpotency class at most two \cite{KKM11},
    the Baumslag--Solitar groups \cite{dlt14}, 
    higher rank lamplighter groups \cite{Taback18} 
    and all fundamental groups of $3$--dimensional 
    manifolds.   
    This shows that Cayley automatic groups 
    include  important classes of 
    groups. 
        
    In this paper we introduce the classes 
    of Cayley automatic groups $\mathcal{B}_f$ defined
    by non--decreasing and non--negative functions 
    $f$.
    Informally speaking, for any given group 
    $G \in \mathcal{B}_f$, 
    the function $f$ shows an upper bound  
    for a level of closeness of the group $G$ 
    to the class of automatic groups. 
    In particular, if $f$ is identically 
    equal to zero, then $G$ must be automatic.     
    So, similarly to a growth function, 
    one can consider $f$ as a numerical characteristic 
    of the group $G$. 
    Studying numerical characteristics of groups 
    and relations between them is an important topic 
    in group theory~\cite{Vershik99}.   
    In this paper we initiate study of this 
    numerical characteristic. We first characterize 
    non--automatic groups in terms of this 
    characteristic. Then we study this 
    characteristic for some
    non--automatic groups, namely, 
    the lamplighter group $\mathbb{Z}_2 \wr \mathbb{Z}$, 
    the Baumslag--Solitar groups $BS(p,q)$, with 
    $1 \leqslant p < q$, and the Heisenberg group 
    $\mathcal{H}_3 (\mathbb{Z})$.    
    Another motivation to introduce this 
    numerical characteristic is to address the
    problem of finding characterization for 
    Cayley automatic groups by studying classes 
    $\mathcal{B}_f$ for some functions $f$.  
    
    The paper is organized as follows.
    In Section \ref{secpreliminaries} we 
    recall the definitions of automatic and Cayley 
    automatic groups. 
    Then we give the definition of the classes 
    of Cayley automatic groups $\mathcal{B}_f$  
    and show that it does not 
    depend on the choice of generators.    
    In Section \ref{seccharnonautomaticgroups} 
    we give a characterization of non--automatic
    groups by showing that if $G \in \mathcal{B}_f$ 
    is non--automatic, 
    then $f$ must be unbounded. 
    In Sections 
    \ref{secbaumslagsolitar}    
    and    
    \ref{seclamplighter}  
    we show that   
    the Baumslag--Solitar groups 
    $BS(p,q)$, with $1 \leqslant p  < q$, 
    and the lamplighter group 
    $\mathbb{Z}_2 \wr \mathbb{Z}$
    are in the class $\mathcal{B}_{\mathfrak{i}}$, 
    where $\mathfrak{i}$ is the identity function: 
    $\mathfrak{i}(n)=n$. 
    Moreover, we show that these  groups 
    cannot be elements of any class 
    $\mathcal{B}_f$, if the function $f$ 
    is less than 
    $\mathfrak{i}$ in coarse sense 
    (see Definition \ref{coarseineqdef}).         
    In Section \ref{secheisenberggroup} we show 
    that the Heisenberg group 
    $\mathcal{H}_3(\mathbb{Z})$ is  
    in the class $\mathcal{B}_\mathfrak{e}$, 
    where $\mathfrak{e}$ is the exponential 
    function: $\mathfrak{e}(n)=\exp(n)$. 
    We then show that $\mathcal{H}_3(\mathbb{Z})$ 
    cannot be an element of any class 
    $\mathcal{B}_f$, if $f$ is less
    than the cubic root function $\sqrt[3]{n}$ in
    coarse sense. Section \ref{secdiscussion} 
    concludes the paper.

 \section{Preliminaries} 
 
 \label{secpreliminaries} 
 
 Let $G$ be a finitely generated infinite group.    
 Let $A \subseteq G$ be a finite generating set 
 of the group $G$.  
 We denote by $S$ the set $S = A \cup A^{-1}$, where 
 $A^{-1}$ is the set of the inverses of elements of $A$.  
 For given elements $g_1, g_1 \in G$, 
 we denote by $d_A (g_1, g_2)$ the distance 
 between the elements $g_1$ and $g_2$ in the Cayley graph 
 $\Gamma (G,A)$. Similarly, we denote by 
 $d_A (g)= d_A (e,g)$ the word length of $g$ with respect
 the generating set $A$.    
 We denote by $\pi: S^* \rightarrow G$ 
 the canonical mapping which sends every 
 word $w \in S^*$ to the corresponding 
 group element $\pi(w)=\overline{w} \in G$.      

 We assume that the reader is familiar 
 with the notion of finite automata and 
 regular languages.
 For a given finite alphabet $\Sigma$   
 we put $\Sigma_\diamond = \Sigma \cup \{\diamond \}$, 
 where 
  $\diamond \notin \Sigma$ is a padding 
  symbol. The convolution of 
  $n$ words $w_1,\dots, w_n \ \in \Sigma^*$ is the string 
  $w_1 \otimes \dots \otimes w_n$		      
  of length $\max\{|w_1|,\dots,|w_n|\}$ over the alphabet 
  $\Sigma_\diamond ^n$ defined as follows. 
  The $k$th symbol
  of the string is $(\sigma_1,\dots,\sigma_n )^\top$, 
  where $\sigma_i$, $i=1,\dots,n$ is the $k$th symbol of 
  $w_i$ if $k \leqslant |w_i|$ and
  $\diamond$ otherwise. 
  The convolution $\otimes R$  of a $n$--ary relation 
  $R \subseteq \Sigma^{*n} $ is defined as
  $\otimes R = \{w_1 \otimes \dots \otimes w_n | 
  (w_1, \dots, w_n) \in R\}$.
  We recall that a $n$--tape synchronous 
  finite automaton is
  a finite automaton over the alphabet 
  $\Sigma_\diamond  ^n \setminus 
  \{(\diamond,\dots,\diamond)\}$. 
  We say that a $n$--ary relation 
  $R \subseteq \Sigma^{*n}$ is regular if  
  $\otimes R$ is accepted by a $n$--tape synchronous 
  finite automaton. 

 Below we give a definition of 
 automatic groups in the sense of Thurston and others 
 \cite{Epsteinbook}. 
 
 \begin{definition}
 \label{defautomatic}        
    We say that $G$ is automatic if 
    there exists a regular language 
    $L \subseteq S^{*}$ such that 
    $\varphi =  \pi|_L : L \rightarrow G$ 
    is a bijection and for every 
    $a \in A$ the binary relation 
    $R_a=
     \{(\varphi^{-1}(g),\varphi^{-1}(ga)) \, | \, g\in G\} 
     \subseteq L \times L$
    is regular.    
 \end{definition}
 \begin{remark}   
  In Definition \ref{defautomatic} we required 
  $\varphi$ to be bijective while in 
  the original definition 
  \cite[Definition~2.3.1]{Epsteinbook} 
  $\varphi  = \pi |_L : L \rightarrow G$ 
  is surjective but it is additionally required 
  that the equality relation 
  $R_e= \{(u,v) \in L \times L \, | \, 
   \pi(u) = \pi(v) \}$ is regular. 
  It can be seen that these two definitions are equivalent.  
  Clearly, if a group $G$ is automatic in the sense of 
  Definition \ref{defautomatic}, 
  then it is automatic in the sense
  of \cite[Definition~2.3.1]{Epsteinbook}. 
  Now, suppose that $G$ is automatic in 
  the sense of \cite[Definition~2.3.1]{Epsteinbook}. 
  Then there is a regular language 
  $L \subseteq S^*$ for which 
  the map $\varphi = \pi |_L : L \rightarrow G$ 
  is surjective and   
  the relations $R_a, a \in A$ 
  and $R_e$ are regular.   
  Let 
  $L' = \{ w \in L \, | \, 
  \left(\forall u <_{llex} w \right) \pi(u) \neq \pi(w)\}$, 
  where $llex$ is a length--lexicographical order. 
  Then $\varphi' = \pi|_{L'} : L' \rightarrow G$ 
  is bijective and for every $a \in A$ the binary relation 
  $R_a'= \{\left(\varphi'^{-1} (g), 
  \varphi'^{-1}(ga) \right) \, | \, g\in G \}
  \subseteq L' \times L'$ 
  is regular. That is, $G$ is automatic in the sense of Definition \ref{defautomatic}. 
 \end{remark}
 \begin{definition}
 \label{defcayleyautomatic}    
    We say that $G$ is Cayley automatic if 
    there exist a regular language 
    $L \subseteq S^*$ and a bijection 
    $\psi: L \rightarrow G$ such that 
    for every $a \in A$ the binary relation 
    $R_a = \{(\psi^{-1}(g),\psi^{-1}(ga))| 
    g \in G\} \subseteq L \times L$ is regular. 
    We call $\psi: L \rightarrow G$ 
    a Cayley automatic representation of $G$.       
 \end{definition}     
%
    
    We denote by $\mathcal{A}$ and $\mathcal{C}$ 
    the classes of all automatic and Cayley automatic groups, respectively.       
    Clearly, $\mathcal{A} \subseteq \mathcal{C}$.  
   However, $\mathcal{A}$ is a proper subset of 
   $\mathcal{C}$:
   for example, the lamplighter 
   group, the Baumslag--Solitar groups and
   the Heisenberg group 
   $\mathcal{H}_3 (\mathbb{Z})$
   are Cayley automatic, 
   but not automatic. 
   We will refer to $\mathbb{N}$ as the set of 
   all positive integers.  
   We denote by $\mathbb{R}^{+}$ the set of 
   all non--negative real numbers.  
   Let $\mathfrak{F}$ be the 
   following set of non--decreasing 
   functions: 
   $$\mathfrak{F} = \{ f: [Q, + \infty ) 
   \rightarrow \mathbb{R}^{+}  \vert
   [Q, + \infty ) \subseteq \mathbb{N} \wedge 
   \forall n(n \in \mathrm{dom}\,f 
   \implies f(n)\leqslant f(n+1))\}.$$  
   \begin{definition}
   \label{coarseineqdef}   
    Let $f,h \in \mathfrak{F}$.    
    We say that $h \preceq f$ if 
    there exist positive integers  
    $K,M$ and $N$    
    such that
    $[N,+\infty) \subseteq 
    \mathrm{dom}\,h \cap \mathrm{dom}\,f$ and     
    $h (n) \leqslant K f(M n)$
    for every integer 
    $n \geqslant N$.    
    We say that 
    $h \asymp f$ if $h \preceq f$ and 
    $f \preceq h$. 
    We say that $h \prec f$ if 
    $h \preceq f$ and $h \not\asymp f$. 
 \end{definition}
  Let $G \in \mathcal{C}$ be a Cayley automatic group
  and $f \in \mathfrak{F}$. 
  Let us choose some finite generating set 
  $A \subseteq G$.     
  For a given language $L \subseteq S^*$ 
  and $n \in \mathbb{N}$  
  we denote by 
  $L^{\leqslant n}$ the set of all words 
  of length less than or equal to $n$ 
  from  the language $L$, i.e.,   
  $L^{\leqslant n} =\{w \in L \, | \, |w| \leqslant n\}$.
  \begin{definition} 
  \label{maindef1}    
    We say that $G \in \mathcal{B}_f$ if 
    there exist a regular language 
    $L \subseteq S^*$ and a Cayley automatic
    representation 
    $\psi : L \rightarrow G$ such 
    that for the function 
    $h \in \mathfrak{F}$,  
    defined by the equation 
    \begin{equation} 
    \label{maineq1}       
      h(n) = 
      \max \{d_A (\pi(w),\psi(w))| 
      w \in L^{\leqslant n}\},
    \end{equation} 
    the inequality $h \preceq f$ holds.   
  \end{definition}    
  We denote by $\mathcal{B}_f$ the class of
  all Cayley automatic groups $G$ for which 
  $G \in \mathcal{B}_f$.  
  Proposition \ref{genindprop1} below shows  
  that Definition \ref{maindef1} does not 
  depend on the choice of generating set $A$.      
  \begin{proposition}
  \label{genindprop1}              
     Definition \ref{maindef1} does not depend on 
     the choice of generating set.  
  \end{proposition}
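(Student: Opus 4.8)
The plan is to establish the two inclusions symmetrically: if some witness $(L,\psi)$ over $S=A\cup A^{-1}$ realises $G\in\mathcal{B}_f$ in the sense of Definition~\ref{maindef1}, I produce a witness $(L',\psi')$ over $S'=A'\cup (A')^{-1}$ realising the same membership, and the converse follows by swapping the roles of $A$ and $A'$. So it suffices to carry out the forward construction.

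Two standard facts drive the argument. First, any two finite generating sets give \emph{bi-Lipschitz equivalent} word metrics: there is a constant $\lambda\geqslant 1$ with $d_{A'}(g_1,g_2)\leqslant\lambda\, d_A(g_1,g_2)$ for all $g_1,g_2\in G$, and symmetrically. Second, since $A'$ generates $G$, each $a\in S$ can be written as a word $\theta(a)\in (S')^{*}$ with $\pi'(\theta(a))=a$, where $\pi'\colon (S')^{*}\to G$ is the canonical map for $A'$; extending $\theta$ to a morphism $S^{*}\to (S')^{*}$ yields $\pi'\circ\theta=\pi$, and $|w|\leqslant|\theta(w)|\leqslant\ell\,|w|$ with $\ell=\max_{a\in S}|\theta(a)|$ (each $\theta(a)$ is non-empty, as $a\neq e$).

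I would then take $L'=\theta(L)$ and $\psi'=\psi\circ\theta^{-1}$. Granting for the moment that $(L',\psi')$ is again a Cayley automatic representation, the metric bookkeeping is immediate: for $w\in L$ we have $\psi'(\theta(w))=\psi(w)$ and $\pi'(\theta(w))=\pi(w)$, whence $d_{A'}(\pi'(\theta(w)),\psi'(\theta(w)))=d_{A'}(\pi(w),\psi(w))\leqslant\lambda\, d_A(\pi(w),\psi(w))$. As every word of $L'$ has the form $\theta(w)$ with $|w|\leqslant|\theta(w)|$, and $h$ is non-decreasing, the function $h'$ defined by \eqref{maineq1} satisfies $h'(n)\leqslant\lambda\, h(n)$. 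Since $h\preceq f$ gives $h(n)\leqslant K f(Mn)$, we obtain $h'(n)\leqslant\lambda K f(Mn)$, that is $h'\preceq f$ by Definition~\ref{coarseineqdef}; hence $G\in\mathcal{B}_f$ with respect to $A'$.

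The genuine work—and the main obstacle—is justifying that $(L',\psi')$ is a \emph{synchronous} Cayley automatic representation over $S'$. Regularity of $L'=\theta(L)$ is routine, being a morphic image of a regular language. For the relations, each $a'\in A'$ is a group element, so right multiplication by $a'$ is a finite composition of the given relations $R_a$; because regular (finite-automaton recognisable) relations on a regular domain are closed under composition via cylindrification and projection, the relation $\{(\psi^{-1}(g),\psi^{-1}(ga'))\mid g\in G\}$ is regular on $L\times L$. The trouble is transporting this through $\theta$ together with the bijectivity of $\psi'$: since $\theta$ assigns blocks of \emph{different} lengths to different generators, the two tracks of a convolution drift out of synchronisation by an unbounded amount, so the naive image of a synchronous relation need not be synchronous, and a length-varying morphism need not even be injective. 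I would resolve both points at once by replacing the plain morphism $\theta$ with a finite-state transducer that emits a \emph{fixed} number $c$ of $S'$-letters per input letter, padding each block with words of the form $a'(a')^{-1}$ that evaluate to the identity in $G$, adding a short self-delimiting name of the input letter (so the recoding is uniquely decodable, hence $\psi'$ is a bijection), and carrying a bounded parity correction in the state when an exact fill is impossible. Such a fixed-output-length transduction preserves synchronous automaticity, being realised block-by-block by a finite automaton run independently on each track, and it satisfies $\pi'(\text{output})=\pi(w)\cdot\rho(w)$ with $\rho(w)$ ranging over a finite set; the bounded discrepancy $\rho(w)$ adds only a constant to $h'$, which is absorbed by $\preceq$ whenever $f$ is unbounded, while the bounded case reduces to the classical independence of automaticity from the generating set \cite{Epsteinbook}. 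Carrying out this fixed-length recoding carefully, in the spirit of the alphabet-change encoding already invoked for Definition~\ref{defcayleyautomatic} \cite{Blumensath99}, is the crux of the proof.
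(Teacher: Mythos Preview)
Your strategy is the same as the paper's: transport the representation through a block morphism $S^*\to(S')^*$ that multiplies lengths by a fixed factor, so that synchronous regularity survives and $\pi'\circ(\text{morphism})=\pi$. The difference is that the paper dispatches in one line the very obstacle you spend most of the proposal on. It simply adjoins $e$ to $A'$ (harmless, since $d_{A'}$ is unchanged) and then chooses for each $g\in S$ a word $w_g\in(S')^*$ with $\pi'(w_g)=g$ of one \emph{common} length, using $e\in S'$ as the padding letter. The induced monoid homomorphism $\xi$ is then injective, strictly length-multiplying, and satisfies $\pi'\circ\xi=\pi$ exactly; hence $L'=\xi(L)$ is regular, the relations transport synchronously with no drift, $\psi'=\psi\circ(\xi|_L)^{-1}$ is a bijection, and one gets $h'\preceq h\preceq f$ with no discrepancy term $\rho(w)$ and no case split on whether $f$ is bounded. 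Your parity-correcting transducer and the appeal to the classical generating-set independence of automaticity for the bounded case can be made to work, but they are unnecessary once you allow $e$ as a generator; that is the simplification you are missing.
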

  \begin{proof}          
     Let $A' \subseteq G$ be another generating 
     set of $G \in \mathcal{B}_f$. 
     We put $S' = A' \cup A'^{-1}$. 
     In order to simplify an exposition of our proof,
     we will assume that $e \in A'$. 
     Let us represent every element $g \in S$ 
     by a word $w_g \in S'^{*}$, i.e., 
     $\pi (w_g) = g$, for which the lengths 
     of the words $|w_g|$ are the same for all 
     $g \in S$. In order to make the 
     lengths $w_g, g \in S$ equal, one can use 
     $e \in  S'$ as a padding symbol.     
     Let us canonically extend the mapping  
     $g \mapsto w_g, g\in S$  
     to the monoid homomorphism 
     $\xi: S^* \rightarrow S'^*$.

     We remark that the definition of $\xi$ 
     ensures that $\pi(\xi(w))=\pi(w)$ for $w \in S^*$.    
     For a given Cayley automatic representation 
     $\psi: L \rightarrow G$ for which 
     $h \preceq f$, we construct a new 
     Cayley automatic representation 
     $\psi': L' \rightarrow G$ as follows. 
     We put $L' = \xi (L) \subseteq S'^*$ and  
     define a bijection        
     $\psi' : L' \rightarrow G$ as
     $\psi' = \psi \circ \tau$, where 
     $\tau = (\xi|_L)^{-1}$.  
     It can be seen that $\psi'$ is a Cayley 
     automatic representation of $G$. Furthermore, 
     for the function $h' \in \mathfrak{F}$ defined 
     by \eqref{maineq1} 
     with respect to $\psi'$ we obtain that 
     $h ' \preceq h$ which implies that $h' \preceq f$.        
     This proof can be generalized for the case when 
     $e \notin A'$.   \qed
  \end{proof}
  We denote by ${\bf z} \in \mathfrak{F}$ the  
  zero function: ${\bf z}(n)=0$ for all 
  $n \in \mathbb{N}$. 
  By Definition \ref{maindef1}, 
  we have that $\mathcal{B}_{\bf z} = \mathcal{A}$. 
  Proposition \ref{increl1} below shows some elementary 
  properties of the classes $\mathcal{B}_f$.    
  \begin{proposition}  
  \label{increl1}   
    If $f \preceq g$, then 
    $\mathcal{A} \subseteq \mathcal{B}_f 
       \subseteq \mathcal{B}_g \subseteq \mathcal{C}$. 
       If $f \asymp g$, then  
       $\mathcal{B}_f = \mathcal{B}_g$.  
  \end{proposition}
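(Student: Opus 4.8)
The plan is to prove the chain of inclusions and the equivalence by unwinding the definitions of $\preceq$ and $\mathcal{B}_f$. Since $\mathcal{B}_{\bf z} = \mathcal{A}$ has already been established and clearly ${\bf z} \preceq f$ for any $f \in \mathfrak{F}$, the inclusion $\mathcal{A} \subseteq \mathcal{B}_f$ will follow immediately once I prove the monotonicity statement $\mathcal{B}_f \subseteq \mathcal{B}_g$ whenever $f \preceq g$. Likewise, $\mathcal{B}_g \subseteq \mathcal{C}$ is trivial because every $G \in \mathcal{B}_g$ is Cayley automatic by Definition~\ref{maindef1}. So the entire first sentence reduces to the single implication: if $f \preceq g$ then $\mathcal{B}_f \subseteq \mathcal{B}_g$.

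First I would take an arbitrary $G \in \mathcal{B}_f$ and unpack what this means: by Definition~\ref{maindef1} there exist a regular language $L$ and a Cayley automatic representation $\psi : L \to G$ such that the associated distance function $h$ defined by \eqref{maineq1} satisfies $h \preceq f$. The key observation is that the \emph{same} representation $\psi$ witnesses membership in $\mathcal{B}_g$: the function $h$ is intrinsic to $\psi$ and does not change, so I only need to verify $h \preceq g$. This is precisely where the transitivity of the relation $\preceq$ enters. From $h \preceq f$ and the hypothesis $f \preceq g$ I would conclude $h \preceq g$, and hence $G \in \mathcal{B}_g$.

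The main technical step, then, is to establish that $\preceq$ is transitive, which I would do directly from Definition~\ref{coarseineqdef}. Suppose $h \preceq f$ with constants $K_1, M_1, N_1$ and $f \preceq g$ with constants $K_2, M_2, N_2$. For $n$ large enough (taking $n \geqslant N_1$ and $M_1 n \geqslant N_2$, i.e. $n \geqslant \max\{N_1, \lceil N_2/M_1 \rceil\}$), I would chain the two inequalities: $h(n) \leqslant K_1 f(M_1 n) \leqslant K_1 K_2\, g(M_2 M_1 n)$, using that $M_1 n \geqslant N_2$ so the second inequality applies at the argument $M_1 n$. Setting $K = K_1 K_2$, $M = M_1 M_2$, and an appropriate threshold $N$, this is exactly the form required by Definition~\ref{coarseineqdef}, so $h \preceq g$. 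I would also record that the domain condition $[N,+\infty) \subseteq \mathrm{dom}\,h \cap \mathrm{dom}\,g$ holds for $N$ chosen large enough, since the relevant domains are all cofinite in $\mathbb{N}$.

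Finally, the equivalence $f \asymp g \implies \mathcal{B}_f = \mathcal{B}_g$ follows by applying the inclusion in both directions: $f \asymp g$ means $f \preceq g$ and $g \preceq f$, so $\mathcal{B}_f \subseteq \mathcal{B}_g$ and $\mathcal{B}_g \subseteq \mathcal{B}_f$ both hold, giving equality. I do not expect any serious obstacle here; the only point requiring care is the bookkeeping of the threshold constants when composing the two coarse inequalities, in particular ensuring the inner argument $M_1 n$ exceeds $N_2$ so that the second inequality is applicable. Everything else is a routine application of the definitions.
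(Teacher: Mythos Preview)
Your proposal is correct and follows essentially the same approach as the paper's proof: both reduce the chain of inclusions to the transitivity of $\preceq$ together with the observations that ${\bf z} \preceq f$ and that membership in $\mathcal{B}_g$ entails Cayley automaticity by definition. The only difference is that you spell out the verification of transitivity explicitly, whereas the paper simply invokes it.
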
  
  \begin{proof}
    By definition, every group of the class 
    $\mathcal{B}_g$ 
    is Cayley automatic, i.e., 
    $\mathcal{B}_g \subseteq \mathcal{C}$. 
    The inclusion 
    $\mathcal{A} \subseteq \mathcal{B}_f$  
    follows from the fact that 
    ${\bf z} \preceq f$ for every $f \in \mathfrak{F}$. 
    The transitivity of the relation $\preceq$ on 
    $\mathfrak{F}$ implies that if $f \preceq g$, then   
    $\mathcal{B}_f \subseteq \mathcal{B}_g$.  
    The fact that $f \asymp g$ implies 
    $\mathcal{B}_f = \mathcal{B}_g$ 
    is straightforward. \qed
  \end{proof}

 \section{Characterizing Non--Automatic Groups}  
 \label{seccharnonautomaticgroups}   
   
  Let $G$ be a Cayley automatic group, 
  $A \subseteq G$ be a finite generating set 
  and $S = A \cup A^{-1}$. 
  Given a word $w \in S^*$, for a 
  non--negative integer $t$ we put 
  $w(t)$ to be the prefix of $w$ of a length $t$, 
  if $t \leqslant |w|$, and $w(t)=w$, if $t>|w|$.
  Following the notation from \cite{Epsteinbook},
  we denote by 
  $\widehat{w}: [0,\infty) 
  \rightarrow \Gamma (G,A)$ 
  the corresponding path in 
  the Cayley graph $\Gamma(G,A)$
  defined as follows. If $t \geqslant 0$ is an integer, 
  then $\widehat{w}(t)= \pi (w(t))$, and
  $\widehat{w}$ is extended to non--integer values  
  of $t$ by moving along the respective edges 
  with unit speed.  
  Given words $w_1,w_2 \in S^*$ and 
  a constant $C_0 \geqslant 0$, we say 
  that the paths $\widehat{w_1}$ and 
  $\widehat{w_2}$ are a uniform distance
  less than or equal to $C_0$ apart if 
  $d_A (\widehat{w_1}(t),\widehat{w_2}(t))
   \leqslant C_0$ for all non--negative integers $t$.     
    
  Theorem \ref{chartheoremepstein1} 
  below is a simplified modification of the 
  theorem characterizing automatic 
  groups due to Epstein 
  et al.~\cite[Theorem~2.3.5]{Epsteinbook}. 
  This theorem follows from the existence 
  of standard 
  automata~\cite[Definition~2.3.3]{Epsteinbook} 
  for all elements of $A$.  
  For the existence of standard automata it is enough
  to assume the solvability of the word problem in $G$. 
  We recall that for Cayley automatic 
  the word problem in $G$ is 
  decidable~\cite[Theorem~8.1]{KKM11}.   
  \begin{theorem}(\cite[Theorem~2.3.5]{Epsteinbook})
  \label{chartheoremepstein1}     
     Let $L \subseteq S^*$ be a regular language 
     such that $\pi : L \rightarrow G$ is surjective. 
     Assume that there is a constant $C_0$ such that
     for every $w_1,w_2 \in L$ and 
     $a \in A$ for which $\pi(w_1)a = \pi(w_2)$,       
     the paths $\widehat{w_1}$ and $\widehat{w_2}$ 
     are a uniform distance less than
     or equal to $C_0$ apart.
     Then $G$ is an automatic group.  
  \end{theorem}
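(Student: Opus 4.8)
The plan is to construct, from the hypothesis, a genuine automatic structure on $G$ by showing that the relations $R_a$ are regular. The key idea is that the uniform fellow-traveller bound $C_0$ lets us recognize the relation $R_a = \{(w_1,w_2) \in L \times L \mid \pi(w_1)a = \pi(w_2)\}$ with a synchronous two-tape automaton. Intuitively, a synchronous automaton reading $w_1 \otimes w_2$ can track the difference between the two paths in the Cayley graph, and the hypothesis guarantees that this difference stays inside a ball of radius $C_0$ around the identity at every integer time step.

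First I would fix $a \in A$ and analyze the relation $M_a = \{(w_1,w_2) \in L \times L \mid \pi(w_1)a = \pi(w_2)\}$. I would argue that the set of elements $g \in G$ with $d_A(e,g) \leqslant C_0$ is finite, say there are $P$ of them, and these will serve as the states that encode the ``current discrepancy'' $\widehat{w_1}(t)^{-1}\widehat{w_2}(t)$ between the two paths at time $t$. Then I would build a synchronous two-tape automaton whose state after reading the length-$t$ prefixes of $w_1$ and $w_2$ is precisely this group element; the transition on reading symbols $(\sigma_1,\sigma_2)$ updates the discrepancy by $\sigma_1^{-1}(\cdot)\sigma_2$, which is a well-defined finite update because the discrepancy is confined to the finite ball of radius $C_0$. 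Whenever the discrepancy would leave this ball the automaton moves to a dead state; the hypothesis ensures that for pairs actually in $M_a$ this never happens. Care must be taken with the padding symbol $\diamond$: once one word is exhausted the automaton continues moving along the longer word while holding the shorter endpoint fixed, and the same finiteness of the discrepancy applies. The accepting condition is that both tapes are exhausted and the final discrepancy equals $a$.

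Since $L$ is regular, I would take the product of this discrepancy-tracking automaton with synchronous automata checking that each projection of the input lies in $L$; the intersection of regular relations is regular, so $M_a$ is regular. To conclude that $G$ is automatic in the sense of Definition~\ref{defautomatic}, I would still need a regular language $L'$ on which $\pi$ restricts to a bijection, since the given $L$ need only be surjective. I would obtain $L'$ by selecting, say, the length-lexicographically least representative in each fibre $\pi^{-1}(g) \cap L$; this language is regular by a standard argument, $\pi|_{L'}$ is a bijection, and the restriction of each regular relation $R_a$ to $L' \times L'$ remains regular.

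The main obstacle I expect is the verification that the discrepancy update is genuinely realizable by a finite automaton together with the correct handling of the padding regime. The subtlety is that the fellow-traveller property is stated for all non-negative integer times $t$, including $t$ beyond the length of the shorter word, so the automaton must keep tracking the discrepancy correctly after one tape is padded out with $\diamond$, and I must confirm that the bound $C_0$ continues to hold in that regime. Once the state space is correctly identified with the finite ball of radius $C_0$ and the transitions are checked to respect group multiplication on both the ordinary and padded portions, regularity of $M_a$ and hence the automaticity of $G$ follow; this is essentially the content of the standard-automata argument referenced from \cite{Epsteinbook}.
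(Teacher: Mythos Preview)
Your proposal is correct and is essentially the approach the paper points to: the paper does not give its own proof but defers to Epstein et al., noting that the theorem ``follows from the existence of standard automata~\cite[Definition~2.3.3]{Epsteinbook} for all elements of $A$,'' and your discrepancy-tracking automaton with states in the ball of radius $C_0$ is exactly the standard automaton construction. Your handling of the padding regime and the reduction from a surjective to a bijective language via length-lex representatives are the expected standard steps.
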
       
 
  Let $d \in \mathfrak{F}$ be any bounded function 
  which is not identically equal to the zero 
  function ${\bf z}$.  Although 
  ${\bf z} \prec d$, the theorem below 
  shows that the class $\mathcal{B}_d$ 
  does not contain any non--automatic group.  
   
  \begin{theorem} 
  \label{chartheorem1}           
    The class $\mathcal{B}_d = \mathcal{A}$. 
    In particular, if 
    for any function $f \in \mathfrak{F}$    
    the class $\mathcal{B}_f$   
    contains a non--automatic group, then 
    $f$ must be unbounded.       
  \end{theorem}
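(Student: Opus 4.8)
The inclusion $\mathcal{A} \subseteq \mathcal{B}_d$ is immediate from Proposition \ref{increl1}, since ${\bf z} \preceq d$ gives $\mathcal{A} = \mathcal{B}_{\bf z} \subseteq \mathcal{B}_d$; the whole content lies in the reverse inclusion $\mathcal{B}_d \subseteq \mathcal{A}$. So I would start from $G \in \mathcal{B}_d$, witnessed by a regular language $L \subseteq S^*$ and a Cayley automatic representation $\psi : L \rightarrow G$ whose associated function $h$ from \eqref{maineq1} satisfies $h \preceq d$. The first step is to convert this into a genuinely uniform bound: since $d$ is bounded, say by $B$, Definition \ref{coarseineqdef} gives $K, M, N$ with $h(n) \leqslant K\,d(Mn) \leqslant KB$ for all $n \geqslant N$, and as $h$ is non--decreasing it is bounded on the whole of its domain by a constant $C$. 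By the definition of $h$ this says $d_A(\pi(w), \psi(w)) \leqslant C$ for every $w \in L$; equivalently, the defect $c(w) = \psi(w)^{-1}\pi(w)$ always lies in the finite ball $\{g \in G \mid d_A(g) \leqslant C\}$.

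The plan is then to manufacture from $\psi$ an honest automatic structure. Because $\pi$ and $\psi$ stay within $C$ on $L$, for each $w$ there is a word $u(w) \in S^{*}$ of length at most $C$ with $\pi(w\,u(w)) = \psi(w)$; setting $K_0 = \{w\,u(w) \mid w \in L\}$ we have $\pi(w\,u(w)) = \psi(w)$, so $\pi|_{K_0} : K_0 \rightarrow G$ is a bijection precisely because $\psi$ is. The multiplier relations for this structure are governed by the relations $R_a$ of $\psi$, twisted by the bounded corrections $u(w)$: each should be expressible as a finite union of compositions of $R_a$ with right--translation relations by the finitely many elements of the ball of radius $C$, and such relations are regular by the standard closure properties of synchronous automata. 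If instead one prefers to invoke Theorem \ref{chartheoremepstein1} directly, surjectivity can be arranged more cheaply by fattening the language to $L\,S^{\leqslant C}$, which is regular and onto $G$ under $\pi$, leaving only the fellow--traveler hypothesis to verify.

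The main obstacle is the regularity underpinning both routes, and it localizes to a single point: that the bounded defect $c : L \rightarrow \{g \mid d_A(g)\leqslant C\}$ is a \emph{regular} map, i.e. that each fibre $\{w \in L \mid c(w) = v\}$ is regular. Once the defect is read off by a finite automaton, the correction $u(w)$ can be appended by a finite transducer, $K_0$ is regular with synchronous multiplier relations, and $G$ is automatic by Definition \ref{defautomatic}; equivalently, the fellow--traveler property of Theorem \ref{chartheoremepstein1} can then be checked for the corrected quasigeodesic representatives. This is exactly where the finiteness of the ball and the existence of standard automata \cite{Epsteinbook} for $G$ — guaranteed by decidability of the word problem for Cayley automatic groups — are meant to enter: a standard automaton tracks the running defect only while it stays in a bounded region, and the uniform bound $C$ is precisely what keeps it there, so the candidate multiplier automata remain finite--state. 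I expect this regularity of the bounded defect (equivalently, that the paths $\widehat{w_1},\widehat{w_2}$ for $\pi(w_1)a = \pi(w_2)$ stay uniformly close) to be the heart of the argument. With it in hand one gets $\mathcal{B}_d \subseteq \mathcal{A}$, hence $\mathcal{B}_d = \mathcal{A}$, and the second assertion of the theorem is then just the contrapositive: a non--automatic member of some $\mathcal{B}_f$ rules out every bounded $f$, so $f$ must be unbounded.
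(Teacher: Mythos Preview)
Your setup is right and matches the paper: reduce to a uniform bound $d_A(\pi(w),\psi(w)) \leqslant K_0$ on $L$, fatten to $L\,S^{\leqslant K_0}$ so that $\pi$ is surjective, and aim for Theorem~\ref{chartheoremepstein1}. But you stop exactly where the real work begins: you correctly flag the fellow--traveler condition (equivalently, regularity of the bounded defect) as the obstacle and then say you ``expect'' it to go through via standard automata. It does not go through that way. A standard automaton compares $\pi(w_1(t))$ with $\pi(w_2(t))$ for two words read in parallel; it has no access to $\psi(w)$, which is an abstract bijection not computed letter by letter from $w$. There is no a priori reason for the fibres of $c(w)=\psi(w)^{-1}\pi(w)$ to be regular, and assuming they are is essentially assuming what you want to prove.

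The idea you are missing is a prefix--completion trick that uses the multiplier automata of the Cayley automatic structure itself, not standard automata. For $w_1=v_1u_1$, $w_2=v_2u_2$ in $L\,S^{\leqslant K_0}$ (with $v_i\in L$, $|u_i|\leqslant K_0$) satisfying $\pi(w_1)a=\pi(w_2)$, one first bounds $d_A(\psi(v_1),\psi(v_2)) \leqslant 4K_0+1$, so $(v_1,v_2)$ is accepted by one of finitely many two--tape automata $M_g$ recognising $\{(x,y)\in L\times L:\psi(x)g=\psi(y)\}$, each with at most some $N_0$ states. The crucial point is that from any reachable state of $M_g$ an accepting state is at distance at most $N_0$, so every prefix pair $(v_1(t),v_2(t))$ can be \emph{completed} to an accepted pair $(v_1(t)p_1,v_2(t)p_2)$ with $|p_i|\leqslant N_0$. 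These completed words lie in $L$, where the bounded--defect hypothesis now applies, giving $d_A(\pi(v_1(t)p_1),\pi(v_2(t)p_2))\leqslant d_A(g)+2K_0$; stripping off the $p_i$ and the $u_i$ costs only a further bounded amount, and one obtains the uniform fellow--traveler constant. Your sketch never brings the prefixes $w_i(t)$ back into $L$, and without that step the hypothesis $d_A(\pi(\cdot),\psi(\cdot))\leqslant K_0$ has nothing to act on.
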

  \begin{proof}
     Let us show that 
     $\mathcal{B}_d = \mathcal{A}$. 
     By Proposition \ref{increl1}, we only need to 
     show that $\mathcal{B}_d 
     \subseteq \mathcal{A}$. 
     Assume that $G \in \mathcal{B}_d$. 
     By Definition \ref{maindef1}, 
     there exists a Cayley automatic
     representation  $\psi_0:L_0 \rightarrow G$ 
     for some $L_0 \subseteq S^*$ 
     such that, for the function 
     $h_0 (n) = 
     \max \{d_A (\pi(w),\psi_0(w)) | 
     w \in L_0^{\leqslant n}\}$, 
     $h_0 \preceq d$.  
     This implies that $d_A (\psi_0 (w), \pi (w))$
     is bounded from above by some constant $K_0$  
     for all $w \in L_0$. 
     
     We put 
     $L_1 = S^{* \leqslant K_0}$.     
     Let $L = L_0 L_1$ 
     be the concatenation of $L_0$ and $L_1$.     
     The language $L$ is regular.   
     For any given $g \in G$, 
     $d_A (\pi(\psi_0^{-1}(g)),g) \leqslant K_0$. 
     This implies that there is a word 
     $u \in L_1$ such that, for the concatenation 
     $w = \psi_0^{-1}(g)u$, $\pi(w) = g$.        
     Therefore, the map $\pi: L \rightarrow G$ is 
     surjective. 
     
     Let $w_1,w_2 \in L$ be some words 
     for which $\pi(w_1)a= \pi (w_2)$, $a \in A$.
     There exist words $v_1,v_2 \in L_0$ and
     $u_1,u_2 \in L_1$ for which $w_1 =v_1 u_1$ 
     and $w_2 = v_2 u_2$. We obtain 
     that $d_A (\psi_0(v_1),\psi_0(v_2)) 
     \leqslant 
     d_A (\pi_0(v_1),\pi_0(v_2))+ 2K_0 
     \leqslant d_A (\pi (w_1), \pi (w_2)) 
     + 2 K_0 + 2K_0 \leqslant 4K_0+1$. That is, 
     there exists  
     $g \in G$, for which 
     $d_A (g) \leqslant 4K_0 +1$, such that 
     $\psi_0 (v_1)g = \psi_0 (v_2)$. 
     The pair $(v_1, v_2)$ is accepted 
     by some two--tape synchronous automaton $M_g$. 
   
     Let $N_g$ be the number of states of $M_g$.    
     Given a non--negative integer 
     $t$, there exist words $p_1,p_2 \in S^*$, 
     for which the lengths $|p_1|,|p_2|$ are 
     bounded from above by 
     $N_g$, such that 
     the pair $(v_1(t) p_1, v_2(t) p_2)$ is accepted 
     by $M_g$; in particular, 
     $v_1(t) p_1, v_2(t) p_2 \in L_0$. We obtain that:
     \begin{equation*}
     \begin{split}
      d_A (\pi (v_1 (t)),\pi(v_2(t))) \leqslant 
      d_A (\pi (v_1 (t)p_1), \pi(v_2 (t)p_2)) + 
      |p_1| + |p_2| \leqslant  \\
      d_A (\psi_0 (v_1 (t)p_1), \psi_0 (v_2 (t)p_2)) 
      + 2 K_0 + 2 N_g  \leqslant \\  d_A(g) + 2 K_0 + 2 N_g  
      \leqslant 6 K_0 + 2 N_g + 1.
      \end{split}
      \end{equation*} 
      Therefore, 
      \begin{equation*}
      \begin{split} 
       d_A (\widehat{w_1}(t), \widehat{w_2}(t))=
       d_A (\pi(w_1 (t)), \pi(w_2 (t))) \leqslant  
       d_A (\pi(v_1 (t)), \pi (v_2 (t))) + 2 K_0 
       \leqslant \\ 8 K_0 + 2 N_g + 1.
       \end{split}
       \end{equation*}  
       There are only finitely many 
       $g$ for which $d_A (g) \leqslant 4K_0 +1$, 
       so $N_g$ can be bound by some constant $N_0$. 
       Thus, for  $C_0=8 K_0 + 2 N_0 + 1$,
       we obtain that 
       $d_A (\widehat{w_1}(t), \widehat{w_2}(t)) 
       \leqslant C_0$, that is, the paths 
       $\widehat{w_1}$ and $\widehat{w_2}$ are a uniform
       distance $C_0$ apart.  By Theorem
       \ref{chartheoremepstein1}, the group  
       $G$ is automatic. 
       The second statement of the theorem is 
       straightforward.  \qed   
  \end{proof}

 \section{The Baumslag--Solitar Groups} 
 \label{secbaumslagsolitar} 
   
   Let us consider 
   the Baumslag--Solitar groups
   $BS(p,q) = \langle a, t  | t a^p t^{-1} = a^q \rangle$ 
   with $1 \leqslant p < q$. 
   These groups are not automatic,  
   see Epstein et al.~\cite[Section~7.4]{Epsteinbook}, 
   but they are Cayley automatic \cite[Theorem~3]{dlt14}. 
   The Cayley automatic  
   representations of the Baumslag--Solitar  
   groups constructed  in \cite[Theorem~3]{dlt14}
   use the normal form 
   obtained from representing 
   these groups as 
   the HNN extensions  \cite[Corollary~2]{dlt14}.  
   This normal form is shown in
   the following proposition.  
  \begin{proposition}  
  \label{BSnormalformlemma1}     
     Any element $g \in BS(p,q)$ for 
     $1 \leqslant p < q$ can be written 
     uniquely as $g= \widetilde{w}(a,t)a^k$, where           
     $$
      \widetilde{w}(a,t) \in \{ t, at, \dots, a^{q-1}t, 
      t^{-1}, at^{-1}, \dots, a^{p-1}t^{-1}\}^*
     $$ 
     is freely reduced and $k \in \mathbb{Z}$. 
  \end{proposition}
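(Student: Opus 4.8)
The plan is to recognise $BS(p,q)$ as the HNN extension of the infinite cyclic group $H=\langle a\rangle$ with stable letter $t$ and associated subgroups $A=\langle a^p\rangle$ and $B=\langle a^q\rangle$, the defining relation $ta^pt^{-1}=a^q$ expressing that conjugation by $t$ carries $A$ isomorphically onto $B$ via $a^p\mapsto a^q$. The engine of the whole argument will be the two rewriting identities that follow at once from the relation: $a^{qs}t=ta^{ps}$ and $a^{ps}t^{-1}=t^{-1}a^{qs}$ for every $s\in\mathbb{Z}$. The first moves any power of $a$ that is a multiple of $q$ from the left of a letter $t$ to its right, where it becomes a multiple of $p$; the second does the same for multiples of $p$ across a letter $t^{-1}$, where they become multiples of $q$. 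I will use the transversal $\{a^0,\dots,a^{q-1}\}$ of $B$ in $H$ and the transversal $\{a^0,\dots,a^{p-1}\}$ of $A$ in $H$; since $[H:B]=q$ and $[H:A]=p$, this is exactly why the admissible syllables are $a^it$ with $0\leqslant i\leqslant q-1$ and $a^jt^{-1}$ with $0\leqslant j\leqslant p-1$.

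For existence I would start from an arbitrary word $a^{m_0}t^{\varepsilon_1}a^{m_1}\cdots t^{\varepsilon_n}a^{m_n}$ representing $g$ and sweep it from left to right. At the $i$-th occurrence of $t^{\varepsilon_i}$ I reduce the power of $a$ immediately preceding it modulo $q$ if $\varepsilon_i=+1$, or modulo $p$ if $\varepsilon_i=-1$; by the two identities the quotient part is pushed to the right of $t^{\varepsilon_i}$ and absorbed into the next power of $a$, while the residue left behind lies in the prescribed range and forms an admissible syllable. Each such move leaves the number of $t$-letters unchanged, so after $n$ steps every power of $a$ sitting to the left of a $t^{\pm1}$ has been normalised, and the only free power of $a$ remaining is the one at the extreme right, which becomes $a^k$. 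Whenever the process produces a subword $tt^{-1}$ or $t^{-1}t$ (equivalently, an admissible $t$-syllable immediately followed by the bare syllable $t^{-1}$, or a $t^{-1}$-syllable followed by the bare $t$) I cancel it, which strictly decreases the number of $t$-letters; hence the combined procedure terminates and yields a word of the required shape $\widetilde{w}(a,t)\,a^k$ with $\widetilde w$ freely reduced.

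The substantive part is uniqueness, and here I would invoke Britton's Lemma for HNN extensions. Suppose $\widetilde{w}_1 a^{k_1}=\widetilde{w}_2 a^{k_2}$ in $BS(p,q)$; then $\widetilde{w}_1 a^{k_1-k_2}\widetilde{w}_2^{-1}=e$. Reading this as a word in $H$ and $t^{\pm1}$, the fact that each syllable of $\widetilde w_1$ and $\widetilde w_2$ carries a coset representative of $A$ or $B$, together with both words being freely reduced, guarantees that the only pinches $th t^{-1}$ with $h\in A$ or $t^{-1}ht$ with $h\in B$ correspond to trivial cancellations $tt^{-1}$, $t^{-1}t$. Britton's Lemma asserts that a word containing at least one $t^{\pm1}$ and no pinch is non-trivial; since our product equals $e$, all of its $t$-letters must disappear in successive pinches, and the coset-representative condition forces these cancellations to identify the syllables of $\widetilde w_1$ and $\widetilde w_2$ one at a time, giving $\widetilde w_1=\widetilde w_2$ and then $k_1=k_2$. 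The main obstacle is precisely this step: one must check that the chosen representative ranges eliminate every non-trivial pinch, so that Britton's Lemma applies verbatim. A fully self-contained alternative is to build the permutation representation of $BS(p,q)$ on the set of candidate normal forms by right multiplication and re-normalisation, verify that the defining relation $ta^pt^{-1}=a^q$ is respected, and read off uniqueness from the faithfulness of this action on the base point $a^0$.

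Finally, I would note that the boundary cases need no separate treatment: when $p=1$ the subgroup $A$ equals $H$ and the only $t^{-1}$-syllable is $t^{-1}$ itself, and when $p=q$ the associated subgroups coincide and the isomorphism $\phi$ is the identity; in both situations $BS(p,q)$ is still a genuine HNN extension of $\langle a\rangle$, so the existence sweep and the Britton uniqueness argument go through unchanged.
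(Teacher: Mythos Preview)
The paper does not actually prove this proposition: it is stated as a quotation of the normal form from \cite[Corollary~2]{dlt14} and \cite[Lemma~3.1]{BurilloElder14}, and the text moves on immediately to the construction of the Cayley automatic representation. There is therefore no proof in the paper to compare your attempt against.

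That said, your argument is the standard one and is correct in outline. Viewing $BS(p,q)$ as the HNN extension of $H=\langle a\rangle$ with stable letter $t$, associated subgroups $\langle a^p\rangle$ and $\langle a^q\rangle$, and taking $\{a^0,\dots,a^{q-1}\}$ and $\{a^0,\dots,a^{p-1}\}$ as transversals is exactly how the cited sources obtain this normal form, and your left--to--right sweep for existence together with Britton's Lemma for uniqueness is precisely the Normal Form Theorem for HNN extensions as in Lyndon--Schupp. One small point worth tightening in the uniqueness paragraph: rather than analysing $\widetilde w_1 a^{k_1-k_2}\widetilde w_2^{-1}$ directly (where the inverted syllables of $\widetilde w_2^{-1}$ are no longer in transversal form), it is cleaner to argue by induction on the number of $t$--letters, applying Britton's Lemma to locate a pinch $t^{\varepsilon}a^r t^{-\varepsilon}$ in the product and using the transversal condition to force $r=0$, which peels off one matching pair of syllables from $\widetilde w_1$ and $\widetilde w_2$. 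Your alternative suggestion of building the action of $BS(p,q)$ on the set of candidate normal forms is also a perfectly good route and is in fact how the Normal Form Theorem is usually proved from scratch.
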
  
  Let us now describe a modification of 
  the Cayley automatic representation of 
  $BS(p,q)$ constructed 
  in \cite[Theorem~3.2]{dlt14} which 
  is compatible with Definition \ref{defcayleyautomatic}.
  We put $a_1 = a, \dots, a_{q-1} = a^{q-1}$.  
  Let $A = \{a_0,a_1,\dots,a_{q-1},t\}$ and 
  $S = A \cup A^{-1} = 
   \{e,a_1,a_2,\dots,a_{q-1},
     a_1^{-1},\dots,a_{q-1}^{-1},t,t^{-1}\}$.
  Given an element 
  $g = \widetilde{w}(a,t)a^k\in BS(p,q)$, 
  we construct 
  the word $w=uv$ which is the concatenation 
  of two words $u,v \in S^*$ defined as follows. 
 
  The word $u \in \{t,t^{-1}, a_1,\dots,a_{q-1} \}^*$ 
  is obtained from the corresponding word 
  $\widetilde{w}(a,t)$   
  by changing the subwords 
  $at^\epsilon, \dots, a^{q-1}t^{\epsilon}$ 
  to the subwords 
  $a_1t^\epsilon, \dots, a_{q-1}t^{\epsilon}$,
  respectively,  
  where $\epsilon=+1$ or $\epsilon=-1$.      
  The word $v$ is obtained from the $q$--ary
  representation of $|k|$ by changing the 
  $0$ to $e$ and 
  $1,\dots,q-1$ to 
  $a_1,\dots,a_{q-1}$ and 
  $a_1^{-1},\dots,a_{q-1}^{-1}$, if 
  $k\geqslant 0$ and $k<0$, respectively.  
  The set of all such words $w$ is a regular language 
  $L \subseteq S^*$. 
  Thus, we have constructed a bijection 
  $\psi: L \rightarrow BS(p,q)$. 
  
  By \cite[Theorem~3.2]{dlt14}, $\psi$ provides 
  a Cayley automatic representation of $BS(p,q)$. 
  It is worth noting that if $g \in BS(p,q)$ is 
  an element for which $k=0$, then 
  for $w= \psi^{-1}(g)$ we obtain that $\psi(w)=\pi(w)$.   
  Let $\widetilde{A}=\{a,t\}$. We have the 
  following metric  estimates for 
  the groups $BS(p,q)$. 
  \begin{theorem}(\cite[Theorem~3.2]{BurilloElder14})
  \label{BSmetrictheorem1}     
     There exist constants $C_1,C_2,D_1,D_2>0$
     such that for every element $g \in BS(p,q)$
     for $1 \leqslant p < q$ written as 
     $\widetilde{w}(a,t)a^k$, we have: 
     $   C_1 (|\widetilde{w}|+\log (|k|+1)) - D_1 
        \leqslant d_{\widetilde{A}} (g)   \leqslant
        C_2 (|\widetilde{w}|+\log (|k|+1)) + D_2      
     $.
  \end{theorem}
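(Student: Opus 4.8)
The plan is to prove the two inequalities separately, the right one by exhibiting short words and the left one by producing length obstructions. Since $|\widetilde{w}| + \log(|k|+1) \asymp \max\{|\widetilde{w}|,\log(|k|+1)\}$, for the lower bound it suffices to bound $d_{\widetilde{A}}(g)$ below by a fixed multiple of each of $|\widetilde{w}|$ and $\log(|k|+1)$ (up to an additive constant) and then add the two estimates, since $2d_{\widetilde{A}}(g)\geqslant C_1|\widetilde{w}| + C_1\log(|k|+1) - D_1$ already gives the claim.

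For the upper bound I would write $g = \widetilde{w}(a,t)a^k$ as a concatenation of two explicit words over $\widetilde{A}=\{a,t\}$. The factor $\widetilde{w}$ is spelled out syllable by syllable: each syllable $a^i t^{\pm 1}$ (with $i\leqslant q-1$) becomes at most $q-1$ copies of $a$ followed by $t^{\pm 1}$, so $\widetilde{w}$ contributes at most $q|\widetilde{w}|$ generators. For the factor $a^k$ (say $k\geqslant 0$) I would compress logarithmically using the relation in the form $a^{qm}=t\,a^{pm}\,t^{-1}$: writing $k=qm+r$ with $0\leqslant r<q$ gives $a^k = t\,a^{pm}\,t^{-1}a^r$, and recursing on $a^{pm}$ replaces the exponent $k$ by $pm\leqslant (p/q)k$ at an additive cost of at most $q+1$ generators. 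Because $p<q$, the exponent decays geometrically, so after $O(\log_{q/p}(|k|+1))$ steps it drops below $q$ and the recursion terminates, yielding a word of length $O(\log(|k|+1))$. Concatenation gives $d_{\widetilde{A}}(g)\leqslant q|\widetilde{w}| + O(\log(|k|+1))$.

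For the lower bound, the $|\widetilde{w}|$ contribution comes from the HNN structure of $BS(p,q)=\langle a,t\mid ta^pt^{-1}=a^q\rangle$ over $\langle a\rangle$. The normal form of Proposition~\ref{BSnormalformlemma1} is a reduced HNN form containing exactly $|\widetilde{w}|$ occurrences of $t^{\pm 1}$, and by Britton's Lemma the number of stable letters in a reduced form is minimal among all words representing $g$. Hence every word over $\widetilde{A}$ representing $g$ contains at least $|\widetilde{w}|$ letters $t^{\pm 1}$, so $d_{\widetilde{A}}(g)\geqslant |\widetilde{w}|$.

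For the $\log(|k|+1)$ contribution I would use the homomorphism $\rho:BS(p,q)\to\mathrm{GL}_2(\mathbb{R})$ with $\rho(a)=\left(\begin{smallmatrix}1&1\\0&1\end{smallmatrix}\right)$ and $\rho(t)=\left(\begin{smallmatrix}q/p&0\\0&1\end{smallmatrix}\right)$, which is well defined since $\rho(t)\rho(a)^p\rho(t)^{-1}=\rho(a)^q$. Put $C_0=\max\{\|\rho(a)\|,\|\rho(t)\|\}>1$; as all four generators and inverses have norm $\leqslant C_0$, a word of length $n$ maps to a matrix of norm $\leqslant C_0^{\,n}$. Writing $\rho(\widetilde{w})=\left(\begin{smallmatrix}\alpha&\beta\\0&1\end{smallmatrix}\right)$ with $\alpha=(q/p)^{s}$, $|s|\leqslant|\widetilde{w}|$, and $|\beta|\leqslant C_0^{\,q|\widetilde{w}|}$, we get $\rho(g)=\left(\begin{smallmatrix}\alpha&\alpha k+\beta\\0&1\end{smallmatrix}\right)$, so $|\alpha k+\beta|\leqslant C_0^{\,n}$ for $n=d_{\widetilde{A}}(g)$. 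The main obstacle is the coupling of $k$ and $\widetilde{w}$ in this entry: $\alpha$ can be as small as $(p/q)^{|\widetilde{w}|}$, so $\alpha k$ need not dominate $\beta$. I would resolve this by a case split. If $\log(|k|+1)\leqslant A|\widetilde{w}|$ for a suitable constant $A$, then $|\widetilde{w}|+\log(|k|+1)\leqslant(1+A)|\widetilde{w}|\leqslant(1+A)d_{\widetilde{A}}(g)$ already. If instead $\log(|k|+1)>A|\widetilde{w}|$ with $A>\log(q/p)+q\log C_0$, then $(p/q)^{|\widetilde{w}|}|k|$ dominates $|\beta|$, whence $|\alpha k+\beta|\geqslant\tfrac12(p/q)^{|\widetilde{w}|}|k|$ and $C_0^{\,n}\geqslant\tfrac12(p/q)^{|\widetilde{w}|}|k|$, giving $n\geqslant c\log(|k|+1)-O(1)$ for some $c>0$. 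Combining the two regimes with the bound $d_{\widetilde{A}}(g)\geqslant|\widetilde{w}|$ yields $d_{\widetilde{A}}(g)\geqslant C_1(|\widetilde{w}|+\log(|k|+1))-D_1$, completing the proof.
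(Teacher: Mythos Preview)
The paper does not give its own proof of this statement: Theorem~\ref{BSmetrictheorem1} is quoted from Burillo and Elder \cite[Theorem~3.2]{BurilloElder14} and used as a black box, so there is nothing in the paper to compare your argument against.

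That said, your proposal is essentially correct and follows the standard route one would expect. The upper bound is fine: spelling out $\widetilde{w}$ costs at most $q|\widetilde{w}|$ letters, and the recursion $a^{qm+r}=t\,a^{pm}\,t^{-1}a^{r}$ drives the exponent down geometrically since $p<q$, producing a word of length $O(\log(|k|+1))$ for $a^{k}$. For the lower bound, your use of Britton's lemma to get $d_{\widetilde{A}}(g)\geqslant|\widetilde{w}|$ is exactly right (the normal form is Britton--reduced with $|\widetilde{w}|$ occurrences of $t^{\pm1}$, and pinching only removes $t$--letters). The affine representation $\rho$ into $\mathrm{GL}_2(\mathbb{R})$ combined with submultiplicativity of the operator norm is the natural way to extract the $\log(|k|+1)$ term, and your case split (either $\log(|k|+1)$ is already controlled by $|\widetilde{w}|$, or it is so large that $\alpha k$ dominates $\beta$) correctly handles the coupling between the two contributions. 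One small point: to make the second case go through cleanly you want $A$ large enough that $(A-\log(q/p)-q\log C_0)|\widetilde{w}|$ absorbs the stray additive $\log 2$, or simply note that finitely many small values of $|\widetilde{w}|$ are covered by the constant $D_1$; either way this is cosmetic.
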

  \begin{remark}
  	 The normal form 
  	 for the elements of $BS(p,q)$
  	 used in \cite{BurilloElder14}
  	 is almost the same as in Proposition 
  	 \ref{BSnormalformlemma1} modulo 
  	 the choice for the range of powers of $a$. 
  	 Namely, any element $g \in BS(p,q)$ for
  	 $1 \leqslant p < q$ can also be 
  	 written uniquely as $g = w(a,t)a^k$ such that 
  	 \begin{equation*}
  	 \begin{split}  
  	   w(a,t) \in  \{t,at,a^2 t,\dots,a^\alpha t, 
  	   a^{-1}t, a^{-2}t,\dots,a^{-\beta}t, 
  	   t^{-1}, at^{-1}, a^2 t^{-1},\dots, \\ 
  	   a^{\gamma}t^{-1}, 
  	   t^{-1}, a^{-1}t^{-1}, 
  	   a^{-2}t^{-1},\dots,
  	   a^{-\delta}t^{-1}\}^*
  	\end{split} 
  	\end{equation*}
    is freely reduced, 
    where 
    $\alpha = \lfloor \frac{q}{2} \rfloor, 
     \beta = \lfloor \frac{q-1}{2}\rfloor, 
     \gamma = \lfloor \frac{p}{2} \rfloor$ and  
     $\delta = \lfloor \frac{p-1}{2} \rfloor$, 
     see \cite[Lemma~3.1]{BurilloElder14}. 
    It can be seen that the metric estimates
    obtained in \cite{BurilloElder14} for 
    the normal form $w(a,t)a^k$ remain
    valid for the normal form $\widetilde{w}(a,t)a^k$ 
    modulo changing the constants $C_1$ and $C_2$. 
  \end{remark}	
  It follows from Theorem \ref{BSmetrictheorem1} 
  that there exist constants  $C_1',C_2',D_1',D_2'>0$
  such that for every element $g \in BS(p,q)$ and for 
  the corresponding word $\psi^{-1}(g) = uv$ we have
  \begin{equation}
  \label{BSmetriceq1}    
     C_1' (|u|+ |v|) - D_1' 
     \leqslant d_{A}(g) \leqslant 
     C_2' (|u|+ |v|) + D_2'.
  \end{equation}
 
 \begin{theorem} 
 \label{BStheorem1}   
    Given $p$ and $q$ 
    for which $1 \leqslant p < q$, 
    the Baumslag--Solitar group     
    $BS(p,q) \in \mathcal{B}_{\mathfrak{i}}$.
    Moreover, for any $f \prec \mathfrak{i}$, 
    $BS(p,q) \notin \mathcal{B}_f$.  
 \end{theorem}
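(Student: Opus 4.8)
The plan is to treat the two assertions separately, and the membership $BS(p,q)\in\mathcal{B}_{\mathfrak{i}}$ is the routine half. I would work directly with the representation $\psi:L\to BS(p,q)$ constructed just above the statement and estimate the defect function $h$ from \eqref{maineq1}. Fix $w=uv\in L$ with $\psi(w)=g=\widetilde{w}(a,t)a^k$. Reading $w$ as a word in the generators gives $\pi(w)=\pi(u)\pi(v)=\widetilde{w}(a,t)\,a^{s}$, where $s$ is the signed digit sum of the $q$--ary expansion encoded by $v$: since the letters of $v$ are powers of $a$, which commute, the place values collapse to a plain sum. By left invariance of $d_A$ we get $d_A(\pi(w),\psi(w))=d_A(\widetilde{w}a^{s},\widetilde{w}a^{k})=d_A(a^{k-s})$. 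Using $|s|\leqslant (q-1)|v|$ and $|k|<q^{|v|}$, together with the upper estimate of Theorem \ref{BSmetrictheorem1} and the bi--Lipschitz equivalence of $d_A$ and $d_{\widetilde{A}}$, this yields $d_A(a^{k-s})\leqslant C\log(|k|+1)+C'\leqslant C''|v|+C'''\leqslant C''n+C'''$ for every $w\in L^{\leqslant n}$. Hence $h(n)\leqslant C''n+C'''$, so $h\preceq\mathfrak{i}$ and $BS(p,q)\in\mathcal{B}_{\mathfrak{i}}$.

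For the sharpness statement I would prove the equivalent claim that \emph{every} Cayley automatic representation of $BS(p,q)$ satisfies $\mathfrak{i}\preceq h$. This suffices: if some representation had $h\preceq f$ with $f\prec\mathfrak{i}$, then $\mathfrak{i}\preceq h\preceq f\preceq\mathfrak{i}$ would force $f\asymp\mathfrak{i}$, contradicting $f\prec\mathfrak{i}$. First I would record a general property of the multiplier relations. Each $R_a$ is a regular relation that is the graph of a bijection of $L$; a pumping argument on the padded tail of an accepting run shows that $\bigl||w|-|\psi^{-1}(\psi(w)a)|\bigr|$ is bounded by the number of states of the automaton recognizing $R_a$ (otherwise functionality would be violated). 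Iterating along a geodesic yields a constant $\kappa$ with $|\psi^{-1}(g)|\leqslant\kappa\,d_A(g)+O(1)$ for all $g$, so by Theorem \ref{BSmetrictheorem1} the word $\psi^{-1}(a^k)$ has length $O(\log|k|)$.

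I would then try to exploit the exponential distortion of $\langle a\rangle$. The $q^{m}$ elements $a^{0},\dots,a^{q^{m}-1}$ all lie in a ball of radius $O(m)$, their $\psi$--preimages are distinct words of length $O(m)$, and each such word is read by $\pi$ to within $\delta:=h(O(m))$ of the corresponding power. The aim is to show that this defect budget cannot be $o(m)$. Here Theorem \ref{BSmetrictheorem1} controls the relevant combinatorics: its lower estimate $d_A(a^{k-j})\succeq\log|k-j|$ bounds by $e^{O(\delta)}$ the number of powers of $a$ lying within distance $\delta$ of any single group element, a collision bound I would combine with the regular structure of $\psi$ to drive $\delta\gtrsim m$, that is $\mathfrak{i}\preceq h$.

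The main obstacle is precisely this last step. Because $BS(p,q)$ has exponential growth, a naive volume comparison between the distorted axis and an ambient ball is far too weak: the powers of $a$ are so sparse inside the ball that ball--counting alone produces no contradiction, and the multiplicative constant $\kappa$ in the length bound dilutes such estimates still further. The linear lower bound is a quantitative and fundamentally \emph{global} failure of the fellow traveller property, not a local volume phenomenon detectable on $\langle a\rangle$ in isolation. Consequently the argument must use the regularity of the whole family $\{R_a\}_{a\in A}$ simultaneously, rather than the geometry of the $a$--axis by itself, and coupling this global regularity to the sharp two--sided estimate of Theorem \ref{BSmetrictheorem1} is where the substantive work of the proof lies.
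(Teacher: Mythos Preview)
Your argument for $BS(p,q)\in\mathcal{B}_{\mathfrak{i}}$ is correct and in fact a little sharper than the paper's: the paper simply uses the crude triangle inequality $d_A(\pi(w),\psi(w))\leqslant |w|+d_A(\psi(w))\leqslant (C+1)n$ via \eqref{BSmetriceq1}, whereas you identify the defect exactly as $d_A(a^{k-s})$ and bound that. (One small slip: the intermediate bound should be $C\log(|k-s|+1)+C'$, not $C\log(|k|+1)+C'$; since $|k-s|\leqslant q^{|v|}+(q-1)|v|$ your final inequality $\leqslant C''|v|+C'''$ is unaffected.)

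The second half, however, has a genuine gap --- one you yourself diagnose but do not close. The collision bound ``at most $e^{O(\delta)}$ powers of $a$ lie within distance $\delta$ of any point'' puts no pressure on $\delta$ unless you can force many of the images $\pi(w_k)$ to coincide or cluster tightly, and nothing in your outline produces such clustering. Exponential growth of $BS(p,q)$ means the $q^m$ words $\psi^{-1}(a^k)$ sit comfortably among the $|S|^{O(m)}$ words of length $O(m)$, so counting alone is hopeless, and invoking ``the regularity of the whole family $\{R_a\}$'' without a concrete mechanism is not a proof.

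The paper takes an entirely different route: it does not try to bound $h$ from below for an arbitrary representation, but instead derives a contradiction with the Dehn function. Supposing $h'\preceq f\prec\mathfrak{i}$ for some representation $\psi'$, one adapts the combing argument behind \cite[Theorem~2.3.12]{Epsteinbook}: any loop of length $\leqslant n$ can be cut into at most $K_0 n^2$ loops of length at most $\ell(n)=4h'(K_0 n)+K_1$, whence $D(n)\preceq n^2 D(\ell(n))$. Feeding in the exponential upper bound $D(n)\leqslant\lambda^n$ for $BS(p,q)$ gives $D(n)\preceq n^2\lambda^{\ell(n)}$, and from $\ell\prec\mathfrak{i}$ one checks $n^2\lambda^{\ell(n)}\prec\mathfrak{e}$. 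This contradicts the exponential lower bound $D(n)\geqslant\mu^n$. The isoperimetric function, not the geometry of the distorted cyclic subgroup, is the global invariant doing the work.
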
 
 \begin{proof}
    For given $p$ and $q$ for which $1 \leqslant p < q$
    let us consider the Cayley automatic representation
    $\psi: L \rightarrow BS(p,q)$ constructed above.     
    Let $h$ be the function given by \eqref{maineq1} 
    with respect to this Cayley automatic representation.    
    We will show that $h \preceq \mathfrak{i}$
    (in fact one can verify that 
    $h \asymp \mathfrak{i}$).      
    Let $w = uv \in L ^{\leqslant n}$ and 
    $g = \psi (w)$ be the corresponding 
    group element of $BS(p,q)$. By \eqref{BSmetriceq1}, 
    there exists a constant $C$ such that
    $d_{A} (g) \leqslant C(|u| + |v|) = C |w|$.
    Therefore, 
    $d_A (\pi(w),\psi(w)) 
    \leqslant n + d_A (g) \leqslant (C+1) n$. 
    Therefore, $h \preceq \mathfrak{i}$ 
    which implies that  
    $BS(p,q) \in \mathcal{B}_{\mathfrak{i}}$.      
    
    Let us show now the second statement of 
    the theorem.     
    Suppose that $BS(p,q) \in \mathcal{B}_f$ for 
    some $f \prec \mathfrak{i}$.        
    Then there exists a  Cayley automatic 
    representation $\psi': L' \rightarrow BS(p,q)$ 
    for which $h' \preceq f$, where 
    $h'$ is given by \eqref{maineq1}. 
    We have 
    $h' \prec \mathfrak{i}$. 
    We recall that for a group 
    $\langle   X | R \rangle$ given by 
    a set of generators $X$ and a set
    of relators $R$ the Dehn function      
    is given by     
    $D(n)= \max_{u \in U_n} 
     \{ \mathrm{area}(u)\}$, where 
    $U_n = \{u \in (X \cup X^{-1})^* | 
            \pi(u)=e \wedge |u| \leqslant n\}$ 
    is the set of words of the length at most $n$ 
    representing the identity of the group 
    $\langle X | R \rangle$  and  
    $\mathrm{area}(u)$ is the combinatorial area of 
    $u$  which 
    is the minimal $k$ for which 
    $u = \prod_{i=1}^{k} v_i r_i^{\pm 1} v_i^{-1}$ 
    in the free group $F(X)$, where $r_i \in R$.       

    Let $w \in \{a,a^{-1},t,t^{-1}\}^{*}$ be 
    a word representing the identity in $BS(p,q)$ 
    for which $|w| \leqslant n$. 
    The word $w$ corresponds to a loop in the Cayley 
    graph $BS(p,q)$ with respect 
    to the generators $a,t$. 
    Similarly to the argument in the proof of   
    \cite[Theorem~2.3.12]{Epsteinbook},
    it can be seen that 
    the loop $w$ can be subdivided into 
    at most $K_0n^2$ loops of length at 
    most $\ell(n) = 4h'(K_0n) + K_1$ for some 
    integer constants $K_0$ and $K_1$.
    Therefore, $D(n) \leqslant  K_0 n^2 D(\ell(n))$ 
    which implies that $D(n) \preceq n^2 D(\ell(n))$.  
  
    For the group $BS(p,q)$ the Dehn function 
    is at most exponential 
    (see \cite[\S~7.4]{Epsteinbook}), i.e., 
    $D(n) \leqslant \lambda^n$ for some 
    constant $\lambda$. Therefore, 
    $D(n) \preceq n^2 \lambda^{\ell(n)}$.
    Clearly, $\ell \preceq h'$ which implies that
    $\ell \prec \mathfrak{i}$. 
    Let us show that  
    $n^2 \lambda ^{\ell(n)} \prec \mathfrak{e}$. 
    It can be seen that 
    $n^2 \lambda ^{\ell(n)} \preceq \mathfrak{e}$. 
    Assume that 
    $\mathfrak{e} \preceq n^2 \lambda^{\ell(n)}$. 
    Then, for all sufficiently large $n$
    and some constants $K$ and $M$ we have: 
    $\exp(n) \leqslant K n^2 \lambda^{\ell(Mn)}$.
    This implies that 
    $n - 2 \ln n - \ln K \leqslant 
    (\ln \lambda) \ell (Mn)$. Clearly, 
    $\frac{n}{2} \leqslant n - 2 \ln n - \ln K$
    for all sufficiently large $n$, and, therefore,  
    $n \leqslant (2\ln \lambda ) \ell (Mn)$.
    This implies that $\mathfrak{i} \preceq \ell$
    which contradicts to the inequality 
    $\ell \prec \mathfrak{i}$. Thus, 
    $D(n)\preceq n^2 \lambda ^{\ell(n)} 
    \prec \mathfrak{e}$ which implies that 
    $D(n) \prec \mathfrak{e}$. The last inequality 
    contradicts to the fact that for the group 
    $BS(p,q)$ the Dehn function is at least 
    exponential, i.e.,  
    $D(n) \geqslant \mu ^n$ for some constant $\mu$
    (see \cite[\S~7.4]{Epsteinbook})    
    which implies that $\mathfrak{e} \preceq D(n)$.  \qed       
 \end{proof}

   \section{The Lamplighter Group} 
 \label{seclamplighter}  
   
   The lamplighter group is 
   the wreath product  $\mathbb{Z}_2 \wr \mathbb{Z}$
   of the cyclic group 
   $\mathbb{Z}_2$ and 
   the infinite cyclic group 
   $\mathbb{Z}$. For the definition of 
   the wreath product of groups we refer the 
   reader to \cite{KargapolovMerzljakov}.           
   Let $t$ be a generator of the cyclic group 
   $\mathbb{Z}= \langle t \rangle$ and  
   $a$ be the nontirival element of the group 
   $\mathbb{Z}_2$. The canonical 
   embeddings of the groups $\mathbb{Z}_2$ and 
   $\mathbb{Z}$ into 
   the wreath product $\mathbb{Z}_2 \wr \mathbb{Z}$ 
   enable us to consider 
   $\mathbb{Z}_2$ and $\mathbb{Z}$ as the 
   subgroups of $\mathbb{Z}_2 \wr \mathbb{Z}$. 
   With respect to the 
   generators $a$ and $t$, the lamplighter  
   group  has the presentation 
   $  \langle a, t 
      \, | \,  
      [t^i a t^{-i}, t^{j} a t^{-j}], a^2 \rangle 
   $.    
   The lamplighter group is not finitely presented 
   \cite{BS62}, and, therefore, 
   it is not automatic 
   due to     
   \cite[Theorem~2.3.12]{Epsteinbook}.     
   
   The elements of the lamplighter group have 
   the following geometric
   interpretation. 
   Every element of the lamplighter group corresponds to 
   a bi--infinite string of lamps, indexed by 
   integers $i \in \mathbb{Z}$, 
   each of which
   is either lit or unlit, such that 
   only finite number of lamps are lit, and the 
   lamplighter pointing at the current lamp 
   $i = m$. 
   The identity of the lamplighter group    
   corresponds to the configuration when all lamps 
   are unlit and the lamplighter points at the lamp
   positioned at the origin $m = 0$. 
   
   The right multiplication by $a$ changes 
   the state of the current lamp. 
   The right multiplication by $t$ (or $t^{-1}$) 
   moves the lamplighter to the right 
   $m \mapsto  m + 1$
   (or to the left $m \mapsto m - 1$).     
   The elements of the subgroup 
   $\mathbb{Z} \leqslant 
   \mathbb{Z}_2 \wr \mathbb{Z}$ are 
   the configurations for which all lamps are unlit. 
   For the elements of the subgroup 
   $\mathbb{Z}_2 \leqslant \mathbb{Z}_2 \wr \mathbb{Z}$ 
   all lamps, apart from 
   the one at the origin, are unlit and the 
   lamplighter points at the lamp 
   positioned at the origin, which 
   can be either lit or unlit. 
           
   For any given integer $i \in \mathbb{Z}$ 
   we put $a_i = t^i a t^{-i}$. 
   The group element $a_i$ corresponds to 
   the configuration when the lamp at 
   the position $i$ is lit, 
   all other lamps are unlit and 
   the lamplighter points at the origin $m=0$.
   Let $g$ be an element of the lamplighter group. 
   The 'right--first' and 
   the 'left--first' normal forms of $g$ are defined 
   as follows:
   \begin{equation*}
      rf(g) = a_{i_1} a_{i_2} \dots a_{i_k} 
              a_{-j_1} a_{-j_2} \dots a_{-j_l} t^m, 
   \end{equation*}
   \begin{equation*}
     lf (g) = a_{-j_1} a_{-j_2} \dots a_{-j_l} 
              a_{i_1}  a_{i_2} \dots a_{i_k} t^m,   
   \end{equation*}   
   where $i_k > \dots > i_2 > i_1 \geqslant 0$, 
   $j_l > \dots > j_1 > 0$ and the lamplighter 
   points at the position $m$ (see \cite{Taback03}). 
   For the element 
   $g$ the lit lamps are at the  
   positions $-j_l, \dots , - j_1, i_1,\dots, i_k$ 
   and the lamplighter points at the position $m$.
   In 'right--first' normal form the lamplighter 
   moves to the right illuminating the appropriate 
   lamps until it reaches the lamp at the position $i_k$. 
   Then it moves back to the origin, and then further 
   to the left illuminating the appropriate lamps 
   until it reaches the lamp at the position $-j_l$. 
   After that the lamplighter moves to the position $m$.   
   Let $A=\{a,t\}$ and $S=\{a,a^{-1},t, t^{-1}\}$.
   \begin{proposition}
   \label{L2distprop1}    
   (\cite[Proposition~3.2]{Taback03})
      The word length of the element $g$ with 
      respect to the generating set $A$ 
      is given by 
      \begin{equation*} 
         d_A (g) = k + l + 
         \min\{2 i_k + j_l + |m+j_l|, 
               2 j_l + i_k + |m-i_k|\}.    
      \end{equation*} 
   \end{proposition}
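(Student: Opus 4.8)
The plan is to split any word $w\in S^*$ representing $g$ into its \emph{toggling} symbols ($a$ and $a^{-1}$, which coincide as group elements since $a^2=e$) and its \emph{moving} symbols ($t$ and $t^{-1}$). Reading $w$ from left to right, the moving symbols trace a walk $\gamma$ on $\mathbb{Z}$ that starts at $0$ and ends at $m$, while each toggling symbol flips the lamp at the current vertex of $\gamma$; hence $|w| = (\#\,\text{toggles}) + \mathrm{length}(\gamma)$. It therefore suffices to minimize these two contributions, and the strategy is to bound each from below and then exhibit a single word meeting both bounds simultaneously.

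For the toggles, a lamp is lit in $g$ exactly when $w$ toggles it an odd number of times. As $g$ has precisely $k+l$ lit lamps (at $i_1,\dots,i_k$ and $-j_1,\dots,-j_l$), the number of toggling symbols is at least $k+l$, and this is attained by flipping each lit lamp exactly once. This contribution is independent of the particular walk, so it will simply add $k+l$ to the final count.

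For the walk, I would bound $\mathrm{length}(\gamma)=\sum_n c_n$, where $c_n$ counts the traversals of the unit edge $\{n,n+1\}$. Two constraints determine the $c_n$. First, since $\gamma$ runs from $0$ to $m$, the signed number of traversals of $\{n,n+1\}$ equals $1$ for edges lying between $0$ and $m$ and $0$ otherwise, which fixes the parity of each $c_n$. Second, to illuminate the extreme lamps the walk must reach $i_k$ on the right and $-j_l$ on the left, forcing $c_n\geqslant 1$ for every edge of the interval $[-j_l,i_k]$; combined with the parity constraint, any such edge whose forced parity is even must satisfy $c_n\geqslant 2$. Summing these per-edge bounds, and distinguishing whether $m$ lies inside $[-j_l,i_k]$, to its right, or to its left (with the conventions $i_k=0$ when $k=0$ and $j_l=0$ when $l=0$ for the degenerate configurations), gives in every case exactly $\min\{2i_k+j_l+|m+j_l|,\,2j_l+i_k+|m-i_k|\}$.

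The matching upper bound is furnished by the walks underlying the two normal forms $rf(g)$ and $lf(g)$: going right to $i_k$, back left to $-j_l$, and then to $m$ uses $2i_k+j_l+|m+j_l|$ moves, while going left first uses $2j_l+i_k+|m-i_k|$; in either walk every lit lamp is passed and can be toggled once, adding $k+l$. Thus $d_A(g)\leqslant k+l+\min\{2i_k+j_l+|m+j_l|,\,2j_l+i_k+|m-i_k|\}$, which together with the two lower bounds yields the claimed equality. I expect the only real difficulty to be the walk lower bound: one must rule out that some clever interleaving of left and right excursions beats both monotone strategies, and this is exactly what the parity-refined edge count accomplishes---though the determination of which edges are forced to even parity (hence to $c_n\geqslant2$) genuinely depends on the position of $m$ relative to $i_k$ and $-j_l$, so the short case analysis must be carried out with care.
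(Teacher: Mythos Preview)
The paper does not supply its own proof of this proposition; it is quoted verbatim from \cite{Taback03} (Cleary--Taback) and used as a black box. So there is nothing in the present paper to compare your argument against.

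That said, your proposal is correct and is essentially the standard argument for this formula. The decomposition of a word over $\{a,t,t^{-1}\}$ into toggle symbols and move symbols, the lower bound $k+l$ on the number of toggles coming from the parity of lamp states, and the reduction of the move count to the length of a shortest walk on $\mathbb{Z}$ from $0$ to $m$ that covers the interval $[-j_l,i_k]$ are exactly the ingredients used in the original reference. Your per-edge parity argument for the walk lower bound is a clean way to see why no interleaved itinerary can beat both the right-first and left-first strategies, and the case split on the position of $m$ relative to $[-j_l,i_k]$ is unavoidable. The only small point worth making explicit in a full write-up is that visiting the extreme lit positions $i_k$ and $-j_l$ already forces the walk to pass through every intermediate lit position (since the walk is on $\mathbb{Z}$), so the single-toggle-per-lamp upper bound is indeed compatible with the shortest walk.
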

   Some Cayley automatic 
   representations of $\mathbb{Z}_2 \wr \mathbb{Z}$
    had been obtained in 
   \cite{KKM11,dlt14,berdkhouss15}. Let us now 
   construct a new Cayley automatic representation 
   of  $\mathbb{Z}_2 \wr \mathbb{Z}$ 
   using the 'right--first' normal form 
   which is compatible with Definition
   \ref{defcayleyautomatic}. 
   For a given element $g$ of the lamplighter group
   we construct the word $w = u'v'$ which is the 
   concatenation of two words $u',v' \in S^*$. 
   The words $u'$ and $v'$ are obtained from 
   the words $u$ and $v$, defined below, 
   by canceling adjacent opposite powers of $t$.    
   Assume first that $m \geqslant 0$.     
   \begin{itemize}   
   \item{Suppose that
         $\{i_1, \dots , i_k \} = \varnothing$ or         
         $\{i_1, \dots , i_k \} \neq \varnothing$ 
         and $m>i_k$. 
         We put 
         $u = t^{i_1}at^{-i_1} \dots 
         t^{i_k} a t^{-i_k} t^m aa$.
         We put 
         $v = t^{-j_1}a t^{j_1} \dots t^{-j_l} a$.}
   \item{Suppose that 
         $\{i_1,\dots, i_k\} \neq \varnothing$ 
         and $m \leqslant i_k$. If $m=i_n$ for
         some $n=1,\dots,k$, then we put  
         $u = t^{i_1}at^{-i_1} \dots 
         t^{i_n} aaa t^{-i_n} \dots t^{i_k} a$.
         Otherwise, either $m< i_{1}$ or
         there exists $q = 1,\dots,k-1$ for 
         which $i_{q} < m < i_{q+1}$.           
         In the first case we put 
         $u = t^{m} aa t^{-m} t^{i_1} a t^{-i_1} 
         \dots t^{i_k} a$.          
         In the latter case we put
         $u = t^{i_1}a t^{-i_1} \dots  
          t^{i_{q}} a t^{-i_q} t^{m} aa t^{-m} 
          t^{i_{q+1}} a t^{-i_{q+1}} \dots 
          t^{i_k}a$.    
          The word $v$ is the same 
          as above. 
         }
    \end{itemize}    
    Assume now that $m<0$. 
    \begin{itemize}   
    \item{Suppose that 
          $\{ j_1, \dots, j_l \} = \varnothing$ or  
          $\{j_1, \dots, j_l \} \neq \varnothing$ 
          and $m<-j_l$. 
          We put 
          $v = t^{-j_1}at^{j_1} \dots 
           t^{-j_l}at^{j_l}t^maa$. 
          We put 
          $u = t^{i_1} a t^{-i_1} \dots t^{i_k} a$.                  
          } 
    \item{Suppose that  
          $\{j_1, \dots, j_l \} \neq \varnothing$ 
          and $m \geqslant - j_l$. If $m=-j_n$ 
          for some $n=1,\dots,l$, then we put 
          $v = t^{-j_1}a t^{j_1} \dots 
           t^{-j_n}aaat^{j_n} \dots t^{-j_l} a$. 
          Otherwise, either $m> -j_1$ or there 
          exists $q = 1, \dots, l-1$ for which 
          $-j_q > m > -j_{q+1}$. In the first case 
          we put $v = t^{m} aa t^{-m} 
          t^{-j_1} a t^{j_1} \dots 
          t^{-j_l} a t^{j_l}$. In the latter 
          case we put 
          $v = t^{-j_1} a t^{j_1} \dots 
          t^{-j_q} a t^{j_q} t^m aa t^{-m} 
          t^{-j_{q+1}} a t^{j_{q+1}} 
          \dots t^{-j_l} a$. The word $u$ is 
          the same as above.       
          }  
    \end{itemize}       
    Let us show two simple examples. Suppose 
    first that
    the lit lamps are at the positions 
    $-1,0,2$ and the lamplighter is at the 
    position $m=1$. Then, for the corresponding group 
    element, the word $w$ is $ataatat^{-1}a$. Suppose 
    now that the lit lamps are at the positions 
    $-1,1$ and the lamplighter is at the position 
    $m=-1$. Then, for the corresponding group element, 
    the word $w$ is $tat^{-1}aaa$.           
    The set of all such words $w$ forms some 
    language $L \subseteq S^*$.     
    Thus, we have constructed the bijection
    $\psi : L \rightarrow \mathbb{Z}_2 \wr 
    \mathbb{Z}$.  
    
    It can be verified that $L$ is a regular 
    language and $\psi$ provides a Cayley 
    automatic representation of the lamplighter 
    group in 
    the sense of Definition \ref{defcayleyautomatic}. 
       We note that in the Cayley automatic 
       representation 
       $\psi: L \rightarrow \mathbb{Z}_2 \wr \mathbb{Z}$        
       constructed above we use the subwords 
       $aa$ and $aaa$ to specify the 
       lamplighter position. We use $aa$ 
       and $aaa$ if the lamp, the lamplighter 
       is pointing at, is unlit and lit, 
       respectively. It is worth noting that if
       $g \in \mathbb{Z}_2 \wr \mathbb{Z}$ 
       is an element 
       for which all lamps at negative positions
       $j<0$ are unlit and $m \geqslant i_k$, then
       for $w = \psi^{-1}(g)$ we obtain 
       that $\pi (w)= \psi (w)$. That is, 
       on a certain infinite subset of
       $L$ the maps $\pi$ and $\psi$ coincide.                       
   \begin{theorem} 
   \label{L2theorem1}      
      The lamplighter group 
      $\mathbb{Z}_2 \wr \mathbb{Z} \in 
      \mathcal{B}_\mathfrak{i}$. 
      Moreover, for any $f \prec \mathfrak{i}$, 
      $\mathbb{Z}_2 \wr \mathbb{Z} 
       \notin \mathcal{B}_f$.   
   \end{theorem}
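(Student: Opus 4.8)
The plan is to prove the two assertions separately. Membership $\mathbb{Z}_2 \wr \mathbb{Z} \in \mathcal{B}_{\mathfrak{i}}$ will mirror the upper-bound argument of Theorem~\ref{BStheorem1}, while the non-membership needs a genuinely new idea: the lamplighter group is not finitely presented, so there is no Dehn function to plug into the scheme used for $BS(p,q)$. For the upper bound I would take the representation $\psi\colon L \to \mathbb{Z}_2 \wr \mathbb{Z}$ constructed above and bound, for $w = u'v' \in L^{\leqslant n}$ with $g = \psi(w)$, the quantity $d_A(\pi(w),\psi(w)) \leqslant d_A(\pi(w)) + d_A(g)$. The first term is at most $|w| \leqslant n$, since $\pi(w)$ is a product of $|w|$ generators. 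For the second term I would read off from the construction that $|w| = |u'| + |v'|$ dominates each of $i_k$, $j_l$, $|m|$, $k$ and $l$ (the path $w$ reaches the extreme lit lamps and the position $m$, and toggles every lit lamp), while Proposition~\ref{L2distprop1} gives $d_A(g) \leqslant k + l + 2 i_k + 2 j_l + |m|$; hence $d_A(g) \leqslant C|w| \leqslant Cn$ for an absolute constant $C$. This yields $h(n) \leqslant (C+1)n$, so $h \preceq \mathfrak{i}$ and $\mathbb{Z}_2 \wr \mathbb{Z} \in \mathcal{B}_{\mathfrak{i}}$.

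For the lower bound, suppose $\mathbb{Z}_2 \wr \mathbb{Z} \in \mathcal{B}_f$ with $f \prec \mathfrak{i}$, and fix a representation $\psi'$ whose closeness function satisfies $h' \preceq f \prec \mathfrak{i}$. I would first reuse the fellow-traveler subdivision from the proof of Theorem~\ref{BStheorem1} (the argument patterned on \cite[Theorem~2.3.12]{Epsteinbook}, which uses only the regularity of the relations $R_a$ together with the bound $h'$, and \emph{not} finite presentability): every word $w$ of length at most $n$ representing the identity is freely equal to a product of conjugates of at most $K_0 n^2$ loops, each of length at most $\ell(n) = 4 h'(K_0 n) + K_1$. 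Since $h' \prec \mathfrak{i}$, the bound $\ell$ is sublinear. The task is then to exhibit null-homotopic words whose fillings are forced to use long loops.

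The device I would use is the quotient $Q_N = \langle a, t \mid a^2,\ [a_i,a_{i'}] \ (0 < |i-i'| < N)\rangle$, where $a_i = t^i a t^{-i}$, in which lamps commute only when they lie within distance $N$. Two facts drive the argument. First, $[a_0,a_N] \neq e$ in $Q_N$: the subgroup generated by the $a_i$ is the right-angled Coxeter group whose defining graph joins $i$ and $i'$ exactly when $0<|i-i'| < N$, and $0$ and $N$ are non-adjacent, so the corresponding involutions do not commute, and the shift action of $t$ does not collapse this. Second, every word of length strictly less than $N$ representing the identity of $\mathbb{Z}_2 \wr \mathbb{Z}$ also represents the identity of $Q_N$: such a word moves the lamplighter through an interval of diameter less than $N$, so all of its lamp-toggles occur at positions pairwise within distance $N$ and hence pairwise commuting in $Q_N$; consequently the reordering and pairwise cancellation of toggles that trivialises the word in $\mathbb{Z}_2 \wr \mathbb{Z}$ is already valid in $Q_N$.

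Combining these, I would apply the subdivision to $r_N = [a_0,a_N]$, a null-homotopic word of length $n_N = 4N+4$. If $\ell(n_N) < N$, then every loop in the subdivision has length less than $N$ and so is trivial in $Q_N$; since $r_N$ is a product of conjugates of these loops and $Q_N$-triviality is preserved under conjugation and multiplication, we would obtain $r_N =_{Q_N} e$, contradicting the first fact. Hence $\ell(n_N) \geqslant N$ for all large $N$, and monotonicity of $\ell$ upgrades this to $\mathfrak{i} \preceq \ell$. As $\ell(n) = 4h'(K_0 n) + K_1 \preceq h'$, this gives $\mathfrak{i} \preceq h' \preceq f$, contradicting $f \prec \mathfrak{i}$. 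The main obstacle I anticipate is the clean verification of the second fact above — that short identity words are already trivial in $Q_N$ (the confinement-and-commuting estimate) — together with checking that the subdivision genuinely presents $r_N$ as a product of conjugates of short loops, so that it can legitimately be pushed into the quotient $Q_N$.
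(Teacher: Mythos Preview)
Your proposal is correct. The upper bound is essentially identical to the paper's argument: both use the constructed representation $\psi$ and Proposition~\ref{L2distprop1} to bound $d_A(g)$ linearly in $|w|$.

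For the lower bound you follow the same overall scheme as the paper --- run the fellow--traveller subdivision to express any null--homotopic word of length $\leqslant n$ as a product of conjugates of loops of length $\leqslant \ell(n)$ with $\ell \prec \mathfrak{i}$, then exhibit relations that cannot be so expressed --- but you implement the second step differently. The paper uses the chain $R_m = \{a^2\}\cup\{[a_i,a_j] : -m\leqslant i<j\leqslant m\}$ indexed by \emph{absolute position}, observes that every loop of length $\leqslant l$ is already a product of conjugates of relators from $R_l$, and then shows that $R_n$ cannot be derived from $R_{n-1}$ by proving $G_n=\langle a,t\mid R_n\rangle \not\cong G_{n-1}$; the latter is obtained by writing $G_n$ as an HNN extension with base $\bigoplus_{-n}^{n}\mathbb{Z}_2$ and invoking Britton's lemma on finite subgroups. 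You instead use the translation--invariant family $Q_N$ with commutators $[a_i,a_j]$ for $|i-j|<N$, identify the kernel of $Q_N\to\mathbb{Z}$ with the right--angled Coxeter group on the graph with edges $\{i,j\}$ for $0<|i-j|<N$, and read off $[a_0,a_N]\neq e$ from the fact that $0$ and $N$ are non--adjacent. Both routes reach $\ell(cN)\geqslant N$, hence $\mathfrak{i}\preceq\ell$, and the contradiction. Your confinement argument for the ``second fact'' is fine: writing $w$ freely as $a_{p_1}^{\epsilon_1}\cdots a_{p_k}^{\epsilon_k}t^M$ with all $p_j$ in an interval of diameter $<N$ and $M=0$, the $a_{p_j}$ pairwise commute in $Q_N$ and each occurs an even number of times. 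The trade--off is that the paper's argument is more self--contained (HNN extensions and Britton's lemma), whereas yours is cleaner and pinpoints a single explicit relator $[a_0,a_N]$ at the cost of importing the standard fact about non--commuting generators in right--angled Coxeter groups.
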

   \begin{proof} 
     Let us consider the Cayley automatic representation 
     $\psi: L \rightarrow \mathbb{Z}_2 \wr \mathbb{Z}$  
     constructed above. 
     Let $h$ be the function given by
     \eqref{maineq1} with respect to the 
     Cayley automatic representation $\psi$. 
     We will show that $h \preceq \mathfrak{i}$
     (in fact one can verify  that 
     $h \asymp \mathfrak{i}$).
     For a given $n$ let $w \in L ^{\leqslant n}$ 
     be a word and $g=\psi(w)$ 
     be the 
     corresponding group element of 
     $\mathbb{Z}_2 \wr \mathbb{Z}$. 
     Clearly, we have that 
     $d_A (\pi(w),\psi(w)) \leqslant n + d_A (g)$. 
     Therefore, it suffices to show that 
     $d_A (g) \leqslant Cn$ for some constant $C$. 
    
     It follows from the construction of 
     $w=\psi^{-1}(g)$ that if $m\geqslant 0$, then 
     $|w| = k + l + \max \{m, i_k \} + j_l + 2$, 
     and if $m<0$, then 
     $|w|=k+l + \max \{-m,j_l\} + i_k + 2$.    
     By Proposition \ref{L2distprop1}, we obtain that 
     $d_A (g) \leqslant 3 |w| \leqslant 3 n$.  
     Therefore, $h \preceq \mathfrak{i}$
     which implies that    
     $\mathbb{Z}_2 \wr \mathbb{Z} \in 
     \mathcal{B}_\mathfrak{i}$.          
     Let us show the second statement of the 
     theorem. 
     For a given $m>0$, let 
     $R_m$ be the following set of relations
        $R_m=\{a^2\} \cup   
           \{[t^i a t^{-i}, t^j a t^{-j}]\,|\, 
             -m \leqslant i < j \leqslant m\}$. 
     We first notice  that
     for any loop 
     $w \in S^*,|w| \leqslant l$     
     in the lamplighter group 
     $\mathbb{Z}_2 \wr \mathbb{Z}$
     the word $w$ can be represented as a product 
     of conjugates of the relations from $R_l$, i.e.,      
     the identity 
     $w =  \prod_{i=1}^{k} 
      v_i r_i ^{\pm 1} v_i^{-1}$ 
     holds in the free 
     group $F(A)$
     for some $v_i \in S^*$ and $r_i \in R_l, 
     i =1, \dots, k$.     
   
     Suppose now that 
     $\mathbb{Z}_2 \wr \mathbb{Z} \in \mathcal{B}_f$ 
     for some $f \prec \mathfrak{i}$.                
     Similarly to Theorem \ref{BStheorem1}, we 
     obtain that       
     then there exists a function  
     $\ell \prec \mathfrak{i}$ such that
     any loop  $w$ of the length 
     less than or equal to $n$
     can be subdivided into loops of the length 
     at most $\ell(n)$. Therefore, 
     for any loop given by a word 
     $w \in S^*, |w| \leqslant n$, 
     the identity 
     $w =  \prod_{i=1}^{k} 
      v_i r_i ^{\pm 1} v_i^{-1}$     
     holds in the free group $F(A)$     
     for some $v_i \in S^*$ and 
     $r_i \in R_{\ell(n)}$, $i=1,\dots,k$.      
     In particular, every 
     relation from $R_{n}$ can be expressed
     as a product of conjugates of the 
     relations from $R_{\ell(8n+4)}$ (the longest 
     relation from $R_{n}$ is  
     $[t^{-n}at^{n},t^{n}at^{-n}]$ which has the
     length $8n+4$).     
     However,  
     not every relation from $R_{n}$ can be expressed 
     as a product of conjugates of the relations 
     from $R_{n-1} \subset R_{n}$ because 
     the groups 
     $G_n = \langle a,t | R_n \rangle$ and 
     $G_{n-1} = \langle a, t | R_{n-1} \rangle$ 
     are not isomorphic.         
     This implies the inequality 
     $\ell(8n+4) \geqslant n$ leading to a contradiction 
     with $\ell \prec \mathfrak{i}$. 
     The fact that $G_n = \langle a,t | R_n \rangle$ and 
     $G_{n-1} = \langle a, t | R_{n-1} \rangle$ are 
     not isomorphic can be shown as follows. 
     
     The group
     $G_n$ can be represented as 
     $G_n = 
      \langle a_{-n},\dots,a_0,\dots,a_n  | 
      a_0 ^2; a_{i-1} = t^{-1} a_i t, 
      i = - (n-1),\dots,n; [a_i,a_j], 
      i,j = -n, \dots,n \rangle$, 
      so $G_n$ is the HNN extension 
      of the base group
      $\bigoplus_{i=-n}^{n} \mathbb{Z}_2
       = \langle a_{-n}, \dots, a_n | 
          a_i ^2 , [a_i, a_j] \rangle $ 
      relative to the isomorphism 
      $\varphi_n$ between the 
      subgroups $A_n,B_n \leqslant G_n$ 
      generated by $a_{-(n-1)},\dots,a_n$ 
      and $a_{-n},\dots,a_{n-1}$, respectively, 
      for which $\varphi_n: a_i \mapsto a_{i-1},
      i = -(n-1),\dots,n$.        
      As a consequence of 
      Britton's lemma \cite{LyndonSchuppbook}, 
      we have the property 
      that every finite subgroup of an HNN extension 
      is conjugate to a finite subgroup of its 
      base group.      
      Assuming that $G_{n+1}$ and 
      $G_n$ are isomorphic, we obtain 
      that $\bigoplus_{i=-(n+1)}^{n+1} 
      \mathbb{Z}_2$
      can be embedded into 
      $\bigoplus_{i=-n}^{n} \mathbb{Z}_2$ 
      which leads to a contradiction. \qed            
   \end{proof}

 \section{The Heisenberg Group}  
 \label{secheisenberggroup}  
   The Heisenberg group $\mathcal{H}_3 (\mathbb{Z})$ 
   is the group of all matrices of the form:    
   \begin{equation*}
   \mathcal{}\left(
   \begin{array}{ccc}
    1 & x & z \\
    0 & 1 & y \\
    0 & 0 & 1
   \end{array}
   \right),
   \end{equation*}
   where $x,y$ and $z$ are integers. 
   Every element $g \in \mathcal{H}_3(\mathbb{Z})$ 
   corresponds to a triple $(x,y,z)$. 
   Let  $s$ be a group element of 
   $\mathcal{H}_3$ corresponding 
   to the triple $(1,0,0)$, 
   $p$ corresponding to $(0,1,0)$, and 
   $q$ corresponding to $(0,0,1)$. 
   If $g$ corresponds to a triple $(x,y,z)$, 
   then $gs,gp$ and $gq$ 
   correspond to the triples 
   $(x+1,y,z)$, $(x,y+1,x+z)$ and $(x,y,z+1)$, 
   respectively.    
   The observation that $\mathcal{H}_3$ is 
   not an automatic group 
   but its Cayley graph is automatic was first made 
   by S\'{e}nizergues.   
    
   
   The Heisenberg group $\mathcal{H}_3$
   is isomorphic to the group 
   $\langle s,p,q | s^{-1}p^{-1}sp=q, sq=qs,  
   pq=qp \rangle$, and 
   it can be generated by the elements $s$ and $p$. 
   The exact distance formula on 
   $\mathcal{H}_3(\mathbb{Z})$ for the  
   generating set $\{s,p\}$ is obtained in 
   \cite[Theorem~2.2]{Blachere03}. 
   However, for our purposes it is enough to 
   have the metric estimates 
   which the reader can find in 
   \cite[Proposition~1.38]{Roebook}.   
   Let $A= \{e,s,p,q\}$ and 
   $S=A \cup A^{-1} = \{e,s,p,q,s^{-1},p^{-1},q^{-1}\}$.       
   \begin{proposition}(\cite[Proposition~1.38]{Roebook})
   \label{H3metricestprop1}   
   There exist constants $C_1$ and $C_2$ such that 
   for an element $g \in \mathcal{H}_3$ corresponding
   to a triple $(x,y,z)$ we have 
   \begin{equation*}
      C_1 (|x| + |y| + \sqrt{|z|}) 
      \leqslant d_A (g) \leqslant  
      C_2 (|x| + |y| + \sqrt{|z|}).
   \end{equation*} 
   \end{proposition}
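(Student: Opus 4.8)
The statement is a two-sided word-metric estimate for the discrete Heisenberg group, so the plan is to prove the upper and lower bounds separately. The key phenomenon driving both is that the central coordinate $z$ is ``quadratically cheap'': a word of length $n$ can change $z$ by an amount of order $n^2$, and no faster.

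For the \textbf{upper bound}, given $g=(x,y,z)$ I would first write $g = s^x p^y q^{z-xy}$; a direct computation with the right-multiplication rules (applying $s^x$ then $p^y$ reaches $(x,y,xy)$, after which $q^{z-xy}$ corrects the centre) shows this is an equality in $\mathcal{H}_3$. Spelling out $q^{z-xy}$ with the generator $q$ literally would cost $|z-xy|$, which is far too much, so instead I would realise the central power efficiently through the commutator identity $s^{-k}p^{-k}s^kp^k = q^{k^2}$ (a consequence of the defining relation $s^{-1}p^{-1}sp=q$), which expresses $q^{k^2}$ as a word of length $4k$. Writing $m=z-xy$ and $|m|=k^2+r$ with $k=\lfloor\sqrt{|m|}\rfloor$ and $0\le r\le 2k$ then yields $d_A(q^{m})\le 6\sqrt{|m|}+O(1)$, hence $d_A(g)\le |x|+|y|+6\sqrt{|z-xy|}+O(1)$. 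The final step is a rebalancing by the AM--GM inequality: $\sqrt{|z-xy|}\le \sqrt{|z|}+\sqrt{|x||y|}\le \sqrt{|z|}+\tfrac12(|x|+|y|)$, which folds the spurious $xy$ cross term back into the linear part and produces the desired bound $d_A(g)\le C_2(|x|+|y|+\sqrt{|z|})$.

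For the \textbf{lower bound} I would establish the two halves independently and combine. The linear half comes from the abelianisation-type homomorphism $\mathcal{H}_3\to\mathbb{Z}^2$, $(x,y,z)\mapsto(x,y)$, under which $s,p$ map to the standard basis and $q$ to $0$: the image of a word of length $n$ has $\ell^1$-norm at most the number of its $s^{\pm},p^{\pm}$ letters, so any word representing $g$ has length at least $|x|+|y|$. The $\sqrt{|z|}$ half is the crux and comes from tracking the central coordinate along a word $w=\sigma_1\cdots\sigma_n$ read as successive right-multiplications from the identity: $z$ changes only on the letters $p^{\pm1}$ (by $\pm$ the current value of $x$) and $q^{\pm1}$ (by $\pm1$), while the running $|x|$ never exceeds $n$. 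Since there are at most $n$ letters of each type, $|z|\le n\cdot n+n$, whence $n\ge \sqrt{|z|}-1$. Combining, $d_A(g)\ge \max\{|x|+|y|,\ \sqrt{|z|}-1\}\ge \tfrac12(|x|+|y|+\sqrt{|z|})-\tfrac12$, and after absorbing the additive constant (only finitely many small $g$ need separate treatment) this gives $d_A(g)\ge C_1(|x|+|y|+\sqrt{|z|})$.

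The main obstacle is precisely this quadratic relation between length and the centre: on the upper side it is realised by the commutator construction $q^{k^2}=s^{-k}p^{-k}s^kp^k$, and on the lower side by the $z$-tracking (signed-area) estimate $|z|\le n^2+n$. The AM--GM rebalancing of the $xy$ term in the upper bound is the one easily-overlooked technical point; the linear lower bound and the bookkeeping of constants are routine.
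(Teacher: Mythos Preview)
Your argument is correct and follows essentially the same route as the paper: the upper bound via the normal form $g=s^xp^yq^{z-xy}$, the commutator identity realising $q^{k^2}$ in $4k$ letters, and the AM--GM rebalancing of the $xy$ term are exactly what the paper does (it records the commutator as $s^kp^ks^{-k}p^{-k}=q^{k^2}$ and the estimate $6\sqrt{|l|}\le 6\sqrt{|z|}+3|n|+3|m|$); for the lower bound the paper simply notes that $d_A(g)=r$ forces $|x|,|y|\le r$ and $|z|\le r+r^2$, which is your abelianisation and $z$-tracking estimates stated more tersely, and then takes $C_1=\tfrac14$.
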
    
   \begin{proof} 
      We first get an upper bound. 
      Every group element $g \in \mathcal{H}_3$ 
      can be represented as $s^n p^m q^l$ 
      corresponding to the triple $(x,y,z)=(n,m,nm+l)$. 
      It can be verified that 
      $s^k p^k s^{-k} p^{-k} = q^{k^2}$. Therefore, 
      the length of $q^l$ is at most 
      $6 \sqrt{|l|} \leqslant 
      6 \sqrt{|z|} + 3|n|+ 3|m|$. 
      For $C_2=6$ we obtain the required upper bound. 
      Let us prove now a lower bound. 
      If $d_A(g)=r$ for an element $g$ corresponding 
      to a triple $(x,y,z)$, 
      then $|x|,|y|\leqslant r$ and 
      $|z| \leqslant r + r^2$. For $C_1= \frac{1}{4}$ 
      we obtain the required lower bound.    \qed     
   \end{proof}

   Let us construct a Cayley automatic representation 
   of the Heisenberg group $\mathcal{H}_3$ which 
   is compatible with Definition 
   \ref{defcayleyautomatic}. 
   For a given $g \in \mathcal{H}_3$ 
   corresponding to a triple $(x,y,z)$ 
   we construct the word $w=u v$ which 
   is the concatenation of two words $u,v \in S^*$
   constructed as follows. 
   We put $u = p^y$. 
   Let $b_x$ and $b_z$ be the binary representations  
   of the integers $|x|$ and $|z|$ 
   (with the least significant digits first).  
   We put $b$ to be $b_x \otimes b_z$ with the 
   padding symbol $\diamond$ changed to $0$.   
   The word $b$ is a word  over the alphabet  
   consisting of the symbols    
   {\tiny $\left(\begin{array}{c} 0 \\ 0 \end{array} 
   \right), \left(\begin{array}{c} 0 \\ 1 \end{array}
   \right),
    \left(\begin{array}{c} 1 \\ 0 \end{array}\right),
    \left(\begin{array}{c} 1 \\ 1 \end{array}\right)$}. 
   
   Replacing the symbols 
   {\tiny $\left(\begin{array}{c} 0 \\ 0 \end{array} 
    \right),
    \left(\begin{array}{c} 0 \\ 1 \end{array}\right),
    \left(\begin{array}{c} 1 \\ 0 \end{array}\right),
    \left(\begin{array}{c} 1 \\ 1 \end{array}\right)$}
    in  $b$ by the words $ee$, $eq$, $se$ and $sq$ 
    we obtain 
    a word $b' \in \{e,s,q\}^*$. 
    If $x \geqslant 0$ and $z \geqslant 0$, 
    then we put $v = b'$.     
    If $x <0$ or 
    $z < 0$, then $v$ is obtained from $b'$ 
    by replacing the symbols $s$ and $q$ to the symbols
    $s^{-1}$ and $q^{-1}$, respectively.  
    For example, the triple $(3,-3,-4)$ is represented 
    by the word $p^{-1}p^{-1}p^{-1}seseeq^{-1}$.
    The set of all such words $w$ is a regular 
    language $L \subseteq S^*$. Thus,
    we have constructed the bijection 
    $\psi: L \rightarrow \mathcal{H}_3$.   
    It can be verified that $\psi$ provides a 
    Cayley automatic representation of   
    the Heisenberg group $\mathcal{H}_3$. 

    It is worth noting that if $g \in \mathcal{H}_3$ 
    corresponds to a triple $(0,y,0)$, then 
    for the word $w = \psi^{-1}(g)$ we have 
    $\psi(w) = \pi(w)$. That is, the
    maps $\pi$ and $\psi$ coincide if 
    restricted on the cyclic 
    subgroup $\langle p\rangle \leqslant \mathcal{H}_3$.
    \begin{theorem} 
    \label{H3thm1}       
      The Heisenberg group       
      $\mathcal{H}_3 \in \mathcal{B}_\mathfrak{e}$.
      Moreover, for any $f \prec \sqrt[3]{n}$,
      $\mathcal{H}_3 \notin \mathcal{B}_f$.     
    \end{theorem}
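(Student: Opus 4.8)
The statement splits into an upper bound, $\mathcal{H}_3 \in \mathcal{B}_{\mathfrak{e}}$, which I would settle by a direct computation with the representation $\psi : L \rightarrow \mathcal{H}_3$ constructed above, and a lower bound, for which I would reuse the isoperimetric template of Theorems \ref{BStheorem1} and \ref{L2theorem1}. For the upper bound, write $w = uv = \psi^{-1}(g)$ for the element $g$ with triple $(x,y,z)$. Since $|u| = |y|$ and $|v|$ is twice the number of binary digits of $\max\{|x|,|z|\}$, a word $w \in L^{\leqslant n}$ satisfies $|y| \leqslant n$ and $|x|,|z| < 2^{n/2}$. Combining the elementary bound $d_A(\pi(w),\psi(w)) \leqslant |w| + d_A(g)$ with the upper estimate $d_A(g) \leqslant C_2(|x| + |y| + \sqrt{|z|})$ from Proposition \ref{H3metricestprop1} gives $h(n) \leqslant n + C_2\bigl(2^{n/2} + n + 2^{n/4}\bigr) = O\bigl(2^{n/2}\bigr)$ for the function $h$ of \eqref{maineq1}. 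As $2^{n/2} = \exp\bigl((\ln 2)n/2\bigr) \preceq \exp(n) = \mathfrak{e}(n)$, we obtain $h \preceq \mathfrak{e}$, hence $\mathcal{H}_3 \in \mathcal{B}_{\mathfrak{e}}$ (in fact $h \asymp \mathfrak{e}$).

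For the lower bound I would argue by contradiction: suppose $\mathcal{H}_3 \in \mathcal{B}_f$ for some $f \prec \sqrt[3]{n}$, and pick a representation $\psi'$ whose function $h'$ satisfies $h' \preceq f \prec \sqrt[3]{n}$. Given a loop of length $\leqslant n$ with vertices $g_0 = e, g_1, \dots, g_N$ ($N \leqslant n$), set $u_j = (\psi')^{-1}(g_j)$ and consider the fan of paths $\widehat{u_j}$. As in the proof of Theorem \ref{chartheorem1}, pumping the synchronous automaton of each multiplier $R_a$ lets me pad prefixes of two consecutive representatives to words of $L'$ that differ by one generator; feeding this into the defect bound $h'$ shows that consecutive paths stay a uniform distance $O\bigl(h'(O(n))\bigr)$ apart. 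This presents the loop as a union of $O(n^2)$ subloops of length at most $\ell(n) = O\bigl(h'(O(n))\bigr)$, so that $\ell \preceq h'$ and the Dehn function satisfies $D(n) \preceq n^2 D(\ell(n))$. Invoking the well-known cubic isoperimetric behaviour $D(n) \asymp n^3$ of $\mathcal{H}_3(\mathbb{Z})$, its upper half yields $D(\ell(n)) \preceq \ell(n)^3$, whence $n^3 \preceq D(n) \preceq n^2 \ell(n)^3$; cancelling $n^2$ gives $\sqrt[3]{n} \preceq \ell \preceq h'$, contradicting $h' \prec \sqrt[3]{n}$.

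The delicate point is the subdivision, and specifically the uniform bound on the height of the fan: the perimeter estimate $O\bigl(h'(O(n))\bigr)$ is only legitimate if every representative $u_j = (\psi')^{-1}(g_j)$ occurring along the loop has length $O(n)$, so that $h'$ is evaluated at arguments $O(n)$ and not at the a priori unbounded quantity $\max_j |u_j|$. Here I would invoke the bounded-length-difference property of synchronous automata: since each $R_a$ is a regular relation that is the graph of a bijection $L' \to L'$, a pumping argument forces $\bigl|\,|(\psi')^{-1}(g)| - |(\psi')^{-1}(ga)|\,\bigr| \leqslant P$ for a constant $P$, and chaining this along the loop from the fixed word $u_0 = (\psi')^{-1}(e)$ gives $|u_j| \leqslant |u_0| + Pj = O(n)$. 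The auxiliary facts — that $h'$ is unbounded because $\mathcal{H}_3$ is non-automatic (Theorem \ref{chartheorem1}), which underlies $\ell \preceq h'$, and the routine coarse-inequality bookkeeping — are straightforward.
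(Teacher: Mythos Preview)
Your proposal is correct and follows essentially the same route as the paper: the upper bound via the explicit representation $\psi$ and Proposition~\ref{H3metricestprop1}, and the lower bound via the subdivision inequality $D(n)\preceq n^2 D(\ell(n))$ combined with $D(n)\asymp n^3$. Your treatment of the ``delicate point'' (bounding $|u_j|$ by a pumping/Lipschitz argument on the synchronous automata) spells out what the paper leaves implicit by citing \cite[Theorem~2.3.12]{Epsteinbook}, and your final contradiction $\sqrt[3]{n}\preceq \ell \preceq h'$ is a slight repackaging of the paper's derivation of $D(n)\prec n^3$; both are equivalent.
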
     
    \begin{proof}         
       Let $h$ be the function given 
       by \eqref{maineq1} with respect 
       to the Cayley automatic representation
       $\psi: L \rightarrow 
        \mathcal{H}_3$ constructed above.       
       We will show that $h \asymp \mathfrak{e}$. 
       Although for the first statement of  
       the theorem it is enough to show that  
       $h \preceq \mathfrak{e}$, the inequality 
       $\mathfrak{e} \preceq h$ guarantees that we 
       cannot get a better result using just 
       the representation $\psi$.             
       Let $w=uv \in L ^{\leqslant n}$ and  
       $g=\psi(w)$ be 
       the group element of  $\mathcal{H}_3$
       corresponding to a triple $(x,y,z)$.               
       By Proposition \ref{H3metricestprop1},  
       there exists a constant $C_2$ such that 
       $d_A(g) \leqslant C_2(|x|+|y|+\sqrt{|z|})
        \leqslant C_2 (2^{|v|}+|u|+\sqrt{2^{|v|}}) 
        \leqslant 2C_2 2^{|u|+|v|} 
        \leqslant 2C_2 \exp{(|w|)} \leqslant 
        2 C_2 \exp{(n)}$. 
      Therefore,  
      $h \preceq \mathfrak{e}$ which implies that  
      $\mathcal{H}_3 \in \mathcal{B}_\mathfrak{e}$.
      
      Let us show now that $\mathfrak{e} \preceq h$.  
      Let $g_i=s^i,i \geqslant 2$. The length 
      of the corresponding word $w_i= \psi^{-1}(g_i)$ 
      is equal to the doubled length of the binary 
      representation of $i$. We have 
      $d_A(\pi(w_i),\psi(w_i))=
      d_A (\pi(w_i)^{-1}s^i)= d_A (s^{n_i})$ 
      for some positive integer $n_i$. Clearly, 
      there exists a constant $C$ such that 
      $n_i \geqslant C i$. The group element $s^{n_i}$
      corresponds to the triple $(n_i,0,0)$. 
      By Proposition \ref{H3metricestprop1}, 
      we have $d_A (s^{n_i}) \geqslant C_1 n_i$. 
      Therefore, there exists a constant 
      $C'>0$ such that 
      $d_A(s^{n_i}) \geqslant C' 2^{\frac{|w_i|}{2}}$ 
      for all $i \geqslant 2$. This implies 
      that $\mathfrak{e} \preceq h$. Therefore, 
      $h \asymp \mathfrak{e}$.    
      
      Let us show now the second statement of the 
      theorem. Repeating exactly the same argument 
      as used in Theorem \ref{BStheorem1}, we conclude 
      that there exists a function 
      $\ell(n) \prec \sqrt[3]{n}$ for which the 
      inequality $D(n)\preceq n^2 D(\ell(n))$ 
      holds, where $D(n)$ is the Dehn function 
      of $\mathcal{H}_3$. For the group 
      $\mathcal{H}_3$ the Dehn function is at most 
      cubic; specifically for the presentation 
      $\mathcal{H}_3 = \langle s,p,q | 
      s^{-1}p^{-1}sp=q,sq=qs,pq=qp \rangle$,
      $D(n)\leqslant n^3$
      (see \cite[\S~8.1]{Epsteinbook}). 
      Therefore, $D(n) \preceq n^2 \ell(n)^3$. 
      Let us show that 
      $n^2 \ell(n)^3 \prec n^3$. It can be seen 
      that $n^2 \ell(n)^3  \preceq n^3$. Assume
      that $n^3 \preceq n^2 \ell(n)^3$. Then, 
      for all sufficiently large $n$ and some 
      constants $K$ and $M$ we have: 
      $n^3 \leqslant K n^2 \ell (Mn)^3$. 
      This implies that 
      $\sqrt[3]{n} \leqslant 
       \sqrt[3]{K} \ell(Mn)$. 
      Therefore, $\sqrt[3]{n} \preceq \ell(n)$ which 
      contradicts to the inequality 
      $\ell(n) \prec \sqrt[3]{n}$. Thus, 
      $D(n) \preceq n^2 \ell(n)^3 \prec n^3$ which   
      implies that $D(n) \prec n^3$. 
      The last inequality contradicts to the fact
      that the Dehn function is at least 
      cubic (see \cite[\S~8.1]{Epsteinbook}) which
      implies that $n^3 \preceq D(n)$.   \qed         
    \end{proof}

    \section{Discussion} 
    \label{secdiscussion}    
    
   In this paper we proposed a way  to 
   measure closeness of Cayley automatic groups 
   to the class of automatic groups. 
   We did this by introducing the classes of 
   Cayley automatic groups $\mathcal{B}_f$ 
   for the functions $f \in \mathfrak{F}$. 
   In Theorem \ref{chartheorem1} we 
   characterized non--automatic groups 
   by showing that for any such group $G$ in
   some class $\mathcal{B}_f$ the function 
   $f$ must be unbounded. We studied 
   then the cases of the Baumslag--Solitar 
   groups $BS(p,q), 1 \leqslant p < q$, the 
   lamplighter group and the Heisenberg 
   group $\mathcal{H}_3$. In Theorems 
   \ref{BStheorem1} and \ref{L2theorem1} 
   we proved that the Baumslag--Solitar groups and 
   the lamplighter group are in the class 
   $\mathcal{B}_\mathfrak{i}$ and they 
   cannot belong to any class $\mathcal{B}_f$ 
   for which $f \prec \mathfrak{i}$. 
   For the Heisenberg group $\mathcal{H}_3$ 
   in Theorem \ref{H3thm1}   
   we proved that  
   $\mathcal{H}_3 \in \mathcal{B}_\mathfrak{e}$, 
   but we could only prove that it cannot 
   belong to any class $\mathcal{B}_f$ for which 
   $f \prec \sqrt[3]{n}$. The following 
   questions are apparent from the results 
   obtained in this paper. 
   \begin{itemize}   
   \item{Is there any unbounded function 
   $f \prec \mathfrak{i}$ for 
   which the class $\mathcal{B}_f$ contains 
   a non--automatic group?} 
   \item{Is there any function 
   $f\prec \mathfrak{e}$ for which 
   $\mathcal{H}_3 \in \mathcal{B}_f$?}
   \item{Is there any characterization of 
         a class $\mathcal{B}_f$,
         where $f$ is an unbounded function?}   
   \end{itemize}

 \section*{Acknowledgments} The authors thank the 
 referees for useful comments. 

 \bibliographystyle{splncs03}

\bibliography{measuringcloseness}

\end{document}